\documentclass[11pt]{article}
\usepackage{amsmath}
\usepackage{amssymb}
\usepackage{amsthm}
\usepackage{centernot}
\usepackage[usenames]{color}
\usepackage{amscd}
\usepackage{dsfont}
\usepackage{indentfirst}

\usepackage[colorlinks=true,linkcolor=blue,filecolor=red,
citecolor=webgreen]{hyperref}
\definecolor{webgreen}{rgb}{0,.5,0}

\numberwithin{equation}{section}

\hoffset=-.7truein \voffset=-.6truein \textwidth=160mm
\textheight=210mm

\def\C{{\mathds{C}}}

\def\N{{\mathds{N}}}

\def\1{{\bf 1}}

\def\lcm{\operatorname{lcm}}

\newcommand{\divides}{\mathrel{\vert}}
\newcommand{\notdivides}{\negthinspace\mathrel{\not\vert}} 

\newtheorem{theorem}{Theorem}[section]

\newtheorem{lemma}[theorem]{Lemma}

\begin{document}

\title{\bf On the number of $k$-compositions of $n$ satisfying certain coprimality conditions}
\author{L\'aszl\'o T\'oth  \\
Department of Mathematics \\
University of P\'ecs \\
Ifj\'us\'ag \'utja 6, 7624 P\'ecs, Hungary \\
E-mail: {\tt ltoth@gamma.ttk.pte.hu}}
\date{}
\maketitle

\centerline{Acta Math. Hungar. {\bf 164} (1) (2021), 135--156}

\begin{abstract} We generalize the asymptotic estimates by Bubboloni, Luca and Spiga (2012) on the number of $k$-compositions of $n$
satisfying some coprimality conditions. We substantially refine the error term concerning the number of $k$-compositions of $n$ with pairwise
relatively prime summands. We use a different approach, based on properties of multiplicative arithmetic functions of $k$ variables and on an
asymptotic formula for the restricted partition function.
\end{abstract}

{\sl Key Words and Phrases}: $k$-compositions, coprime summands, $t$-wise relatively prime summands, multiplicative arithmetic function of several variables, restricted partition function.

{\sl 2010 Mathematics Subject Classification}: 05A17, 11N37, 11P81.

\medskip
{The research was financed by NKFIH in Hungary, within the framework of the 2020-4.1.1-TKP2020 3rd thematic programme of the University of P\'ecs.}

\section{Motivation}

Let $n,k\in \N:=\{1,2,\ldots \}$. For $n\ge k$, the $k$-compositions of $n$ are the representations $x_1+\cdots+x_k=n$, where
$(x_1,\ldots,x_k)\in \N^k$. It is well known that the number of all $k$-compositions of $n$ is $\binom{n-1}{k-1}$. Gould \cite{Gou1964} considered the
number $R_k(n)$ of $k$-compositions of $n$ such that $\gcd(x_1,\ldots,x_k)=1$, obtained that its Lambert series is
\begin{equation*}
\sum_{n=1}^{\infty} R_k(n) \frac{x^n}{1-x^n} = \frac{x^k}{(1-x)^k},
\end{equation*}
and then deduced that
\begin{equation} \label{R_n_k}
R_k(n) = \sum_{d\divides n} \binom{d-1}{k-1}\mu(n/d),
\end{equation}
where $\mu$ is the M\"{o}bius function. A short direct proof of identity \eqref{R_n_k} is the following.
By using that $\sum_{d\mid n} \mu(d)=1$ or $0$, according to $n=1$ or $n>1$, we obtain that
\begin{equation} \label{proof_R_n_k}
R_k(n) = \sum_{\substack{x_1,\ldots,x_k\ge 1\\x_1+\cdots+ x_k=n}} \sum_{d\divides \gcd(x_1,\ldots,x_k)} \mu(d)
= \sum_{d\divides n} \mu(d) \sum_{\substack{y_1,\ldots,y_k\ge 1\\y_1+\cdots+ y_k=n/d}} 1
= \sum_{d\divides n} \mu(d) \binom{n/d-1}{k-1},
\end{equation}
giving \eqref{R_n_k}.

Let $J_t(n)=n^t \prod_{p\divides n} (1-1/p^t)$ denote the Jordan function of order $t$, where $J_1(n)=\varphi(n)$ is Euler's totient function.
If $k=2$, then it follows that $R_2(n)=\varphi(n)$ for every $n\ge 2$. Identity \eqref{R_n_k} shows that for $k\ge 3$,
\begin{equation} \label{R_k_n_asympt}
R_k(n) = \frac1{(k-1)!}J_{k-1}(n) + O(n^{k-2}),
\end{equation}
as $n\to \infty$. In fact, \eqref{R_n_k} implies that for every $k$, $R_k(n)$ is a linear combination of the Jordan functions $J_t(n)$ ($1\le t\le k-1$).
More exactly,
for every $n\ge k\ge  2$,
\begin{equation*}
R_k(n) = \frac1{(k-1)!} \sum_{t=1}^{k-1} a_{k,t} J_t(n) = \frac1{(k-1)!}\left(J_{k-1}(n) -\frac{k(k-1)}{2} J_{k-2}(n)+\cdots +a_{k,1} \varphi(n) \right),
\end{equation*}
where
\begin{equation*}
a_{k,t}(n) = \sum_{j=t}^{k-1} (-1)^{j-t} s(k-1,j) \binom{j}{t},
\end{equation*}
$s(n,k)$ denoting the signed Stirling numbers of the first kind.

Bubboloni, Luca and Spiga \cite{BLS2012} investigated the number of $k$-compositions of $n$ with some different coprimality constraints.
Namely, let
\begin{equation*}
A_k(n):= \# \{(x_1,\ldots,x_k)\in \N^k: x_1+\cdots +x_k=n, \gcd(x_1,x_2\cdots x_k)=1\}
\end{equation*}
and
\begin{equation*}
B_k(n):= \# \{(x_1,\ldots,x_k)\in \N^k: x_1+\cdots +x_k=n, \gcd(x_i,x_j)=1, 1\le i< j\le k\}.
\end{equation*}

Note that, if $k=2$, then $A_2(n)=B_2(n)=R_2(n)=\varphi(n)$ for every $n\ge 2$. They proved in \cite{BLS2012} that for any $k\ge 3$,
\begin{equation} \label{A_k}
A_k(n) = C_k f_k(n)\frac{n^{k-1}}{(k-1)!} + O\left(n^{k-2} (\log n)^{k-1} \right)
\end{equation}
and
\begin{equation} \label{B_k}
B_k(n) = D_k g_k(n)\frac{n^{k-1}}{(k-1)!} + O\left(n^{k-1} (\log n)^{-1} \right),
\end{equation}
as $n\to \infty$, where $C_k$ and $D_k$ are explicit constants, depending only on $k$, while $f_k(n)$ and $g_k(n)$ are depending on $k$ and on the prime
factorization of $n$. Actually, it was also proved in \cite{BLS2012} that $2/3<f_k(n)<2$ and $1/(2k)<g_k(n)<2k$ for every $n\in \N$,
and explicit constants for the above $O$-terms were given. Note also, that in \cite{BLS2012} the sum $A_k(n)$ was denoted by $A_{k-1}(n)$, and
the corresponding results were formulated for $k+1$ instead of $k$. As described in \cite{BLS2012}, these results have applications in group
theory and Galois theory.

In a recent preprint, Thomas \cite{Tho2020} considered the case $k=3$ and obtained for the sum $B_3(n)$ the error term $O(n^{3/2+\varepsilon})$, which
refines \eqref{B_k}.

In this paper we define for $1\le s \le k-1$,
\begin{equation*}
A_{k,s}(n):= \# \{(x_1,\ldots,x_k)\in \N^k: x_1+\cdots +x_k=n, \gcd(x_1\cdots x_s, x_{s+1}\cdots x_k)=1\},
\end{equation*}
representing the number of $k$-compositions of $n$ such that the first $s$ summands are coprime to the last $k-s$ summands.
We remark the symmetry property $A_{k,s}(n)= A_{k,k-s}(n)$, valid for every $1\le s\le k-1$.
Also, for $2\le t \le k$ we define
\begin{equation*}
B_{k,t}(n):= \# \{(x_1,\ldots,x_k)\in \N^k: x_1+\cdots +x_k=n, \gcd(x_{i_1},\ldots,x_{i_t})=1, 1\le i_1<\ldots < i_t\le k\},
\end{equation*}
representing the number of $k$-compositions of $n$ such that the summands are $t$-wise relatively prime. If $s=1$ and $t=2$, then
$A_{k,1}(n)=A_k(n)$ and $B_{k,2}(n)=B_k(n)$, defined above. Also, if $t=k$, then $A_{k,k}(n)=R_k(n)$.

We obtain asymptotic formulas with error terms for $A_{k,s}(n)$ and $B_{k,t}(n)$, which generalize formulas \eqref{R_k_n_asympt}, \eqref{A_k}
and \eqref{B_k}. In the case $t=2$ our error term substantially refines the error term of formula \eqref{B_k}, obtained
in \cite{BLS2012,Tho2020}. We use a different approach, inspired by \eqref{proof_R_n_k}, based on properties of multiplicative arithmetic
functions of $k$ variables and the restricted partition function
\begin{equation} \label{def_N}
N(n;a_1,\ldots,a_k):= \# \{(x_1,\ldots,x_k)\in \N^k: a_1x_1+\cdots +a_kx_k=n\}.
\end{equation}

The treatments for the sums $A_{k,s}(n)$ and $B_{k,t}(n)$ are quite similar. For the proofs we use the method and some results from our
earlier work \cite{Tot2016}, concerning the asymptotic density of the $k$-tuples of positive integers
with $t$-wise relatively prime components, with some other notation. The main results of the present paper are given in Section
\ref{Section_Main_results}, and their proofs are included in Section \ref{Section_Proofs}. Some preliminaries, needed for the proofs, are presented in Section \ref{Section_Preliminaries}.

We will use the notation $n=\prod_p p^{\nu_p(n)}$ for the prime power factorization of $n\in \N$, the product being over the primes $p$,
where all but a finite number of the exponents $\nu_p(n)$ are zero. The number of distinct prime factors of $n$ will be denoted by $\omega(n)$.

\section{Main results} \label{Section_Main_results}

For $k\ge 3$ and $1\le s\le k-1$ let
\begin{equation} \label{notation_F}
F_{k,s}(x): = \frac1{x} \left(x^k+ (x-1)^k- x^s(x-1)^{k-s}- x^{k-s}(x-1)^s + (-1)^{k-1}\right),
\end{equation}
which is a polynomial of degree $k-3$. Also, for $k\ge 3$ and $2\le t\le k$ let
\begin{align}  \label{notation_G}
G_{k,t}(x): = & \frac1{x} \left(\sum_{j=0}^{t-1} \binom{k}{j}(x-1)^{k-j}+(-1)^{k-t} \binom{k-1}{t-1} \right)
\nonumber
\\ = & \frac1{x} \left(x^k- \sum_{j=t}^{k-1}(-1)^{j-t} \binom{k}{j} \binom{j-1}{t-1} x^{k-j} \right),
\end{align}
which is a polynomial of degree $k-1$.

We prove the following results.

\begin{theorem} \label{Theorem_A} Let $k\ge 3$ and let $1\le s \le k-1$. Then
\begin{equation*}
A_{k,s}(n) = C_{k,s} f_{k,s}(n)\frac{n^{k-1}}{(k-1)!} + \begin{cases} O\left(n^{k-2}(\log n)^{\max(s,k-s)} \right), & \text{if $k\ge 4$,} \\
O\left(n(\log n)^2 \log \log n \right), & \text{if $k=3$,}
\end{cases}
\end{equation*}
where
\begin{equation*}
C_{k,s} = \prod_p \left(1- \frac{F_{k,s}(p)}{p^{k-1}} \right),
\end{equation*}
\begin{equation*}
f_{k,s}(n)= \prod_{p\divides n} \left(1 + \frac{(-1)^{k-1}}{p^{k-1}-F_{k,s}(p)} \right).
\end{equation*}
\end{theorem}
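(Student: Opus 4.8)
The plan is to mimic the direct argument \eqref{proof_R_n_k} that produced $R_k(n)$, but now applied to the coprimality condition $\gcd(x_1\cdots x_s,\,x_{s+1}\cdots x_k)=1$. First I would detect this condition by a multiplicative arithmetic function of $k$ variables. Writing $\varrho_{k,s}(\mathbf{x})=1$ if the first $s$ coordinates are coprime to the last $k-s$ and $0$ otherwise, one checks that $\varrho_{k,s}$ is multiplicative in the $k$-variable sense, and hence there is a (multiplicative) function $h_{k,s}$ with $\varrho_{k,s}=\mathbf{1}*\cdots*\mathbf{1}\text{-convolved-with-}h_{k,s}$; more precisely, for each coordinate one writes $\varrho_{k,s}(\mathbf{x})=\sum_{d_i\divides x_i}h_{k,s}(d_1,\dots,d_k)$. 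Computing $h_{k,s}$ at prime powers is a finite local calculation: $h_{k,s}(p^{a_1},\dots,p^{a_k})$ is supported on tuples of exponents that are $0$ or $1$, and an inclusion–exclusion over which of the first-block and last-block coordinates are divisible by $p$ yields exactly the polynomial $F_{k,s}$ in \eqref{notation_F} — this is where the mixed terms $x^s(x-1)^{k-s}$ and $x^{k-s}(x-1)^s$ come from (they count tuples that violate coprimality locally because $p$ divides some $x_i$ in the first block and some $x_j$ in the second).

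Next I would insert this convolution identity into the defining sum and interchange summations, exactly as in \eqref{proof_R_n_k}:
\begin{equation*}
A_{k,s}(n)=\sum_{\substack{d_1,\dots,d_k\ge 1}} h_{k,s}(d_1,\dots,d_k)\, N(n;d_1,\dots,d_k),
\end{equation*}
where $N$ is the restricted partition function \eqref{def_N}. The support restriction on $h_{k,s}$ means each $d_i$ is squarefree, and in fact the whole $k$-tuple $(d_1,\dots,d_k)$ is constrained so that its support is small; I would reorganize the sum over the least common multiple (or rather over the radical) of the $d_i$. Then I would invoke the asymptotic formula for $N(n;a_1,\dots,a_k)$ from Section~\ref{Section_Preliminaries}: the main term is $n^{k-1}/((k-1)!\,a_1\cdots a_k)$ plus an error of order $n^{k-2}$ times something like $\sigma$-type divisor weights in the $a_i$. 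Plugging this in, the main term becomes
\begin{equation*}
\frac{n^{k-1}}{(k-1)!}\sum_{d_1,\dots,d_k}\frac{h_{k,s}(d_1,\dots,d_k)}{d_1\cdots d_k},
\end{equation*}
and the multiplicativity lets me factor this Dirichlet-type series over primes as an Euler product, which I would identify with $C_{k,s}f_{k,s}(n)$ — the product $C_{k,s}=\prod_p(1-F_{k,s}(p)/p^{k-1})$ being the "generic" local factor and $f_{k,s}(n)$ the correction at primes dividing $n$ (the restricted partition count depends on $n$ only through which $d_i$'s are allowed, creating the $p\divides n$ dependence and the $(-1)^{k-1}/(p^{k-1}-F_{k,s}(p))$ adjustment).

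The main obstacle is controlling the error term, and this is where the two cases $k\ge 4$ versus $k=3$ split. The contribution of the $O$-term in $N(n;d_1,\dots,d_k)$ must be summed against $|h_{k,s}(d_1,\dots,d_k)|$ over the relevant range $d_i=O(n)$; since $h_{k,s}$ is supported on $k$-tuples with few prime factors, this sum is essentially a sum over squarefree moduli weighted by a bounded local factor, which produces logarithmic losses. Tracking the exponent of $\log n$ carefully — it should come out as $\max(s,k-s)$, reflecting the larger of the two blocks — requires estimating something like $\sum_{d\le n}2^{\omega(d)}/d$-type quantities raised to an appropriate power, and for $k=3$ the partition-function error is only one power of $n$ smaller, so the extra $\log\log n$ and the squared log appear from pushing the divisor-sum estimate to its limit. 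I would also need the tail estimate showing that the completed Euler product differs from the truncated sum by an acceptable amount; the convergence of $\prod_p(1-F_{k,s}(p)/p^{k-1})$ is clear since $F_{k,s}$ has degree $k-3$, so $F_{k,s}(p)/p^{k-1}=O(p^{-2})$, which also justifies that $C_{k,s}$ and $f_{k,s}(n)$ are well-defined and $f_{k,s}(n)$ is bounded.
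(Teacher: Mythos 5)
Your plan follows the same route as the paper: detect the coprimality condition by the multiplicative indicator function, invert it by a convolution $\vartheta_{k,s}=\1*\lambda_{k,s}$, insert the restricted partition asymptotics, and identify the resulting Euler product with $C_{k,s}f_{k,s}(n)$. However, there are two concrete gaps. First, your reorganization ``over the least common multiple (or rather over the radical) of the $d_i$'' is the wrong invariant: the sum must be grouped by $\delta=\gcd(d_1,\ldots,d_k)$, because $N(n;d_1,\ldots,d_k)$ vanishes unless $\delta\divides n$ and equals $N(n/\delta;d_1/\delta,\ldots,d_k/\delta)$ otherwise, and Lemma \ref{Lemma_Partition} applies only to coprime coefficient tuples. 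This grouping is not a cosmetic choice: the divisor condition $\delta\divides n$ is precisely what produces the factor $f_{k,s}(n)=\prod_{p\divides n}(1+(-1)^{k-1}/(p^{k-1}-F_{k,s}(p)))$, via $\lambda_{k,s}(\delta,\ldots,\delta)=(-1)^{(k-1)\omega(\delta)}\mu^2(\delta)$ (Lemma \ref{Lemma_prop_lambda}). Your explanation that the $p\divides n$ dependence comes from ``which $d_i$'s are allowed'' does not pin this mechanism down.

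Second, and more seriously, the truncation tail is the crux of the proof and your plan does not contain an argument for it. After replacing the sum over $j_1,\ldots,j_k\le n/\delta$ by the full series $T_\delta$, one must bound $\sum' |\lambda_{k,s}(j_1,\ldots,j_k)|/(j_1\cdots j_k)$ over tuples with some $j_i>x$ by $x^{-1}(\log x)^{\max(s,k-s)}$; a naive bound fails because the full series with absolute values does not converge fast enough to simply pull out $x^{-1}$. The paper's argument uses the support structure of $\lambda_{k,s}$ in an essential way: if a prime $p>x$ divides a single large coordinate $j_a$ and no other $j_\ell$, then $\lambda_{k,s}(j_1,\ldots,j_k)=0$, so the large coordinates are forced to be $x$-smooth, and one can then bound by an Euler product over $p\le x$ (giving the $(\log x)^{\max(s,k-s)}$ via Mertens). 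Moreover, when exactly two coordinates $j_a>x$, $j_b>x$ are large with $a$ in the first block and $b$ in the second, the relevant Dirichlet series converges only on the boundary $\Re(z_a+z_b)>1$ (Lemma \ref{Lemma_Dir_series}), and the paper needs an additional split of the range at $j_a\le x^{3/2}$ versus $j_a>x^{3/2}$ to rescue the bound. Without these steps you cannot justify the stated error exponent $\max(s,k-s)$ on the logarithm, nor the extra $\log\log n$ when $k=3$ (which comes from $\prod_{p\divides n}(1+1/p)\ll\log\log n$ in the divisor sum over $\delta$). So the skeleton is right, but the part of the proof that actually determines the error term is missing.
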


\begin{theorem} \label{Theorem_B} Let $k\ge 3$ and let $2\le t \le k$. Then
\begin{equation*}
B_{k,t}(n) = D_{k,t} g_{k,t}(n) \frac{n^{k-1}}{(k-1)!} + \begin{cases} O\left(n^{k-2} \right), & \text{if $k\ge 4$, $t\ge 3$, or $k=t=3$,} \\
O\left(n^{k-2} (\log n)^{k-1} \right), & \text{if $k\ge 4$, $t=2$,} \\
O\left(n (\log n)^2 (\log \log n)^2 \right), & \text{if $k=3$, $t=2$,}
\end{cases}
\end{equation*}
where
\begin{equation*}
D_{k,t} = \prod_p \frac{G_{k,t}(p)}{p^{k-1}},
\end{equation*}
\begin{equation*}
g_{k,t}(n)= \prod_{p\divides n} \left(1 + \frac{(-1)^{k-t+1}\binom{k-1}{t-1}}{G_{k,t}(p)}\right).
\end{equation*}
\end{theorem}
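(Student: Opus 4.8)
The plan is to follow the pattern of the short proof \eqref{proof_R_n_k}, but with the single M\"obius inversion there replaced by an inversion of the characteristic function of the $t$-wise coprimality condition, treated as a multiplicative arithmetic function of $k$ variables, and with the sum over $k$-compositions reduced to the restricted partition function $N$ of \eqref{def_N}. Let $\chi_{k,t}\colon\N^k\to\{0,1\}$ be the indicator of the $t$-wise relatively prime $k$-tuples. Whether a tuple is $t$-wise coprime ``at'' a prime $p$ depends only on the set $S$ of coordinates divisible by $p$ (it fails precisely when $|S|\ge t$), so $\chi_{k,t}$ is multiplicative in the several-variables sense used in \cite{Tot2016}. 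Writing $\chi_{k,t}(x_1,\dots,x_k)=\sum_{d_1\mid x_1,\dots,d_k\mid x_k}\varrho_{k,t}(d_1,\dots,d_k)$, the function $\varrho_{k,t}$ is again multiplicative, vanishes unless all $d_i$ are squarefree, and its value on a squarefree tuple having exactly $m$ coordinates divisible by $p$, call it $\varrho_p(m)$, equals $\sum_{j=0}^{\min(m,t-1)}(-1)^{m-j}\binom mj$, which is $1$ for $m=0$, is $0$ for $1\le m\le t-1$, and equals $(-1)^{m+t-1}\binom{m-1}{t-1}$ for $m\ge t$ (using $\sum_{j=0}^{r}(-1)^j\binom mj=(-1)^r\binom{m-1}{r}$). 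Swapping the two finite sums yields the exact identity
\begin{equation*}
B_{k,t}(n)=\sum_{d_1,\dots,d_k\ge 1}\varrho_{k,t}(d_1,\dots,d_k)\,N(n;d_1,\dots,d_k).
\end{equation*}

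Next I would insert the asymptotic formula for $N(n;d_1,\dots,d_k)$ from Section \ref{Section_Preliminaries}. The decisive feature is that, with $g:=\gcd(d_1,\dots,d_k)$, one has $N(n;d_1,\dots,d_k)=0$ when $g\nmid n$, and $N(n;d_1,\dots,d_k)=N(n/g;d_1/g,\dots,d_k/g)$ when $g\mid n$, so that uniformly
\begin{equation*}
N(n;d_1,\dots,d_k)=\frac{n^{k-1}}{(k-1)!}\cdot\frac{g}{d_1\cdots d_k}\,\mathds 1[g\mid n]+r(n;d_1,\dots,d_k),
\end{equation*}
where $r$ is of smaller order; it is the factor $\mathds 1[g\mid n]$ that forces the main term to depend on the prime factorization of $n$ (when $g=1$ one simply recovers the naive main term). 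This splits $B_{k,t}(n)=M(n)+E(n)$ with $M(n)=\tfrac{n^{k-1}}{(k-1)!}\sum_{\mathbf d}\varrho_{k,t}(\mathbf d)\tfrac{\gcd(\mathbf d)}{d_1\cdots d_k}\mathds 1[\gcd(\mathbf d)\mid n]$. The summand of $M(n)$ is multiplicative in $\mathbf d=(d_1,\dots,d_k)$, and since $\varrho_{k,t}$ vanishes for $1\le m\le t-1$ the Euler factor at $p$ is $1+O(p^{-t})$ with $t\ge 2$, so the series converges absolutely and factors into an Euler product whose local factor at $p$ is $1+\sum_{m=t}^{k-1}\binom km\varrho_p(m)p^{-m}$ when $p\nmid n$, with the term $\varrho_p(k)p^{-(k-1)}$ added when $p\mid n$ (the extra power of $p$ from $\gcd(\mathbf d)$ occurs only when every coordinate is divisible by $p$). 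Comparing with the two expressions for $G_{k,t}$ in \eqref{notation_G} and using the above binomial identity identifies the first bracket with $G_{k,t}(p)/p^{k-1}$ and the extra term with $(-1)^{k-t+1}\binom{k-1}{t-1}p^{-(k-1)}$; separating the primes dividing $n$ from the rest then gives precisely $M(n)=D_{k,t}\,g_{k,t}(n)\,n^{k-1}/(k-1)!$.

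The remaining task, and the main obstacle, is to bound $E(n)=\sum_{\mathbf d}\varrho_{k,t}(\mathbf d)\,r(n;\mathbf d)$. I would split it according to $d_1+\dots+d_k\le n$, where the remainder bound for $N$ is applied, and $d_1+\dots+d_k>n$, where $N(n;\mathbf d)=0$ so that one must instead estimate the corresponding tail of the series defining $M(n)$; in both ranges only $\mathbf d$ with $\gcd(\mathbf d)\mid n$ contribute. Using that every prime dividing $d_1\cdots d_k$ divides at least $t$ of the $d_i$ (the support structure of $\varrho_{k,t}$), together with the quantitative estimates for $N$ and the several-variable summation arguments of \cite{Tot2016}, one controls both pieces. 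When $t\ge 3$ the relevant Dirichlet series converge with room to spare and the result is the clean $O(n^{k-2})$; when $t=2$ these series sit on the boundary of convergence, and truncating the moduli at size $\asymp n$ costs logarithmic factors in number matching the stated exponents — $(\log n)^{k-1}$ for $k\ge 4$, with an extra $\log\log n$ contribution in the delicate case $k=3$. Making this final estimate sharp, i.e.\ extracting the exact powers of $\log n$ and $\log\log n$ when $t=2$, is the step I expect to require the most care.
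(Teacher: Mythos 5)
Your proposal follows essentially the same route as the paper: Möbius-type inversion of the multiplicative indicator of $t$-wise coprimality in $k$ variables, reduction to the restricted partition function $N$, an Euler-product evaluation of the singular series, and tail estimates of the type in \cite{Tot2016}. Your identification of the inverse function's values $(-1)^{m-t+1}\binom{m-1}{t-1}$ agrees with \eqref{psi}, and your direct Euler-product computation of the main term (carrying the indicator $\mathds 1[\gcd(\mathbf d)\mid n]$ inside the product) is a slightly slicker packaging of what the paper does in two steps, by grouping over $\delta=\gcd(d_1,\dots,d_k)$ and then evaluating the sum $V_\delta$ of Lemma \ref{Lemma_V_delta}; both yield $D_{k,t}\,g_{k,t}(n)$ correctly.

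There is, however, one concrete gap in the error analysis: the case $k=t=3$, for which you claim the ``clean'' bound $O(n^{k-2})=O(n)$ on the grounds that for $t\ge 3$ the relevant series converge with room to spare. They do converge, but the contribution of the tuples with $\gcd(\mathbf d)=\delta>1$ produces the factor $\sum_{\delta\mid n}\mu^2(\delta)\,\delta^{-(k-2)}\binom{k-1}{t-1}^{\omega(\delta)}=\prod_{p\mid n}\bigl(1+\binom{k-1}{t-1}p^{-(k-2)}\bigr)$, which is $O(1)$ only when $k\ge 4$; for $k=t=3$ it is $\prod_{p\mid n}(1+1/p)\ll\log\log n$, so your argument as described delivers $O(n\log\log n)$, not $O(n)$. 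The paper closes this case by a separate observation: $B_{3,3}(n)=R_3(n)$ (indeed $B_{k,k}=R_k$), and the exact formula \eqref{R_n_k} via \eqref{R_k_n_asympt} gives the error $O(n)$ directly. You would need to add this (or an equivalent device) to reach the stated bound. The remaining cases are consistent with your sketch, though you should make explicit that the $(\log\log n)^2$ for $k=3$, $t=2$ arises from $\prod_{p\mid n}(1+2/p)\le\prod_{p\mid n}(1+1/p)^2\ll(\log\log n)^2$ in the same $\delta$-sum.
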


We note that $f_{k,s}(n)$ and $g_{k,t}(n)$ are positive bounded functions of $n$, which specify the asymptotic behavior of $A_{k,s}(n)$ and
$B_{k,t}(n)$. In the case $s=1$ we recover from Theorem \ref{Theorem_A} formula \eqref{A_k} with the same error term for $k\ge 4$, and with a
slightly weaker error term for $k=3$. In the case $t=2$ we recover from Theorem \ref{Theorem_B} formula \eqref{B_k} with a substantially better error term for every $k\ge 3$.

\section{Preliminaries to the proofs} \label{Section_Preliminaries}

\subsection{Arithmetic functions of several variables}

Let $f:\N^k\to \C$ be an arithmetic function of $k$ variables. Its Dirichlet series is given by
\begin{equation*}
D(f;z_1,\ldots,z_k)= \sum_{n_1,\ldots,n_k=1}^{\infty}
\frac{f(n_1,\ldots,n_k)}{n_1^{z_1}\cdots n_k^{z_k}}.
\end{equation*}

Similar to the one variable case, if $D(f;z_1,\ldots,z_k)$ is
absolutely convergent for $(z_1,\ldots,z_k)\in \C^k$, then it is
absolutely convergent for every $(s_1,\ldots,s_k)\in \C^k$ with $\Re
s_j \ge \Re z_j$ ($1\le j\le k$).

The Dirichlet convolution of the functions $f,g:\N^k\to \C$ is defined by
\begin{equation*}
(f*g)(n_1,\ldots,n_k)= \sum_{d_1\mid n_1, \ldots, d_k\mid n_k}
f(d_1,\ldots,d_k) g(n_1/d_1, \ldots, n_k/d_k).
\end{equation*}

If $D(f;z_1,\ldots,z_k)$ and $D(g;z_1,\ldots,z_k)$ are
absolutely convergent, then
$D(f*g;z_1,\ldots,z_k)$ is also absolutely convergent and
\begin{equation*}
D(f*g;z_1,\ldots,z_k) = D(f;z_1,\ldots,z_k) D(g;z_1,\ldots,z_k).
\end{equation*}

We also recall that a nonzero arithmetic function of $k$ variables $f:\N^k\to \C$ is said to be multiplicative if
\begin{equation*}
f(m_1n_1,\ldots,m_kn_k)= f(m_1,\ldots,m_k) f(n_1,\ldots,n_k)
\end{equation*}
holds for every $m_1,\ldots,m_k,n_1,\ldots,n_k\in \N$ such that $\gcd(m_1\cdots m_k,n_1\cdots n_k)=1$.
If $f$ is multiplicative, then it is determined by the values
$f(p^{\nu_1},\ldots,p^{\nu_k})$, where $p$ is prime and
$\nu_1,\ldots,\nu_k\in \N_0:=\N \cup \{0\}$. More exactly, $f(1,\ldots,1)=1$ and
for every $n_1,\ldots,n_k\in \N$,
\begin{equation*}
f(n_1,\ldots,n_k)= \prod_p f(p^{\nu_p(n_1)}, \ldots,p^{\nu_p(n_k)}),
\end{equation*}
the product being over the primes $p$. If $f,g:\N^k \to \C$ are multiplicative, then their Dirichlet convolution $f*g$ is also multiplicative.
If $k=1$, i.e., in the case of functions of a
single variable we recover the familiar notion of multiplicativity.

If $f$ is multiplicative, then its Dirichlet series can be expanded into a (formal)
Euler product, that is,
\begin{equation} \label{Euler_product}
D(f;z_1,\ldots,z_k)=  \prod_p \sum_{\nu_1,\ldots,\nu_k=0}^{\infty}
\frac{f(p^{\nu_1},\ldots, p^{\nu_k})}{p^{\nu_1z_1+\cdots +\nu_k
z_k}},
\end{equation}
the product being over the primes $p$. More exactly, if $f$ is multiplicative, then the series
$D(f;z_1,\ldots,z_k)$ is absolutely convergent if and only if
\begin{equation*}
\sum_p \sum_{\substack{\nu_1,\ldots,\nu_k=0\\ \nu_1+\cdots +\nu_k \ge 1}}^{\infty}
\frac{|f(p^{\nu_1},\ldots, p^{\nu_k})|}{p^{\nu_1 \Re z_1+\cdots +\nu_k
\Re z_k}} < \infty
\end{equation*}
and in this case equality \eqref{Euler_product} holds. See Delange \cite{Del1969} and the survey by the author \cite{Tot2014}.

\subsection{The restricted partition function}

Let $\N_0:=\{0,1,2,\ldots\}$ and consider the restricted partition function
\begin{equation} \label{def_P}
P(n;a_1,\ldots,a_k):= \# \{(x_1,\ldots,x_k)\in \N_0^k: a_1x_1+\cdots +a_kx_k=n\},
\end{equation}
where $a_1,\ldots,a_k$ are given positive integers. It is known that in the case $\gcd(a_1,\ldots,a_k)=1$ the function $P(n;a_1,\ldots,a_k)$
can be represented as a $(k-1)$-degree polynomial in $n$ plus a periodic sequence of $n$. More exactly, one has
\begin{equation} \label{form_P}
P(n;a_1,\ldots,a_k)= c_{k-1}n^{k-1}+ c_{k-2} n^{k-2}+\cdots + c_1n+c_0 + s(n;a_1,\ldots,a_k),
\end{equation}
where
\begin{equation} \label{coeff_P}
c_{k-1} = \frac1{(k-1)!a_1\cdots a_k}, \quad  c_{k-2}= \frac{a_1+\cdots +a_k}{2(k-2)!a_1\cdots a_k},
\end{equation}
and $s(n;a_1,\ldots,a_k)$ is a periodic sequence with period $\lcm(a_1,\ldots,a_k)$. See Comtet \cite[Sect.\ 2.6, Th.\ C]{Com1974}.
There is a rich bibliography on properties of the restricted partition function, in particular on its polynomial part. See, e.g., the recent
papers \cite{AC2005, CF2018, DV2017} and their references.

For our purposes the variables $x_1,\ldots,x_k$ are positive. If $(x_1,\ldots,x_k)\in \N^k$ is a solution of the equation $a_1x_1+\cdots + a_kx_k=n$, 
then $(y_1,\ldots,y_k)=(x_1-1,\ldots,x_k-1)\in \N_0^k$ is a solution for $a_1y_1+\cdots + a_ky_k=n-a_1-\cdots -a_k$, and conversely. Hence the connection 
between the functions \eqref{def_N} and \eqref{def_P} is given by
\begin{equation} \label{connect_N_P}
N(n;a_1,\ldots,a_k) = P(n-a_1-\cdots-a_k;a_1,\ldots,a_k).
\end{equation}

We need the following result.

\begin{lemma} \label{Lemma_Partition} Let $k\in \N$, $k\ge 2$ be fixed and let $a_1,\ldots,a_k\in \N$ such that $\gcd(a_1,\ldots,a_k)=1$. Then
\begin{equation*}
N(n;a_1,\ldots,a_k) = \frac{n^{k-1}}{(k-1)!a_1\cdots a_k}+ O\left(\frac{n^{k-2}(a_1+\cdots +a_k)}{a_1\cdots a_k} \right),
\end{equation*}
as $n\to \infty$, uniformly for $n,a_1,\ldots, a_k$.
\end{lemma}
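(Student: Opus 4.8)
The plan is to reduce the claimed uniform estimate to the polynomial-plus-periodic structure of $P(n;a_1,\ldots,a_k)$ recorded in \eqref{form_P}--\eqref{coeff_P}, together with the translation \eqref{connect_N_P}. First I would substitute $m = n - a_1 - \cdots - a_k$ into \eqref{form_P}, so that
\begin{equation*}
N(n;a_1,\ldots,a_k) = c_{k-1} m^{k-1} + c_{k-2} m^{k-2} + \cdots + c_0 + s(m;a_1,\ldots,a_k).
\end{equation*}
Write $\sigma := a_1 + \cdots + a_k$ and $\pi := a_1 \cdots a_k$. Expanding $m^{k-1} = (n-\sigma)^{k-1} = n^{k-1} - (k-1)\sigma n^{k-2} + \cdots$ by the binomial theorem and using $c_{k-1} = 1/((k-1)!\,\pi)$, the leading term is exactly $n^{k-1}/((k-1)!\,\pi)$, as desired. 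Everything else must be absorbed into $O(n^{k-2}\sigma/\pi)$.

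The bulk of the work is a term-by-term size estimate, keeping careful track of uniformity in the $a_i$. The contribution $c_{k-1}\bigl(m^{k-1} - n^{k-1}\bigr)$ is a sum of terms $c_{k-1}\binom{k-1}{j}(-\sigma)^j n^{k-1-j}$ for $1 \le j \le k-1$; since $\sigma \le n$ may be assumed (otherwise $N = 0$ trivially, or one restricts to $n$ large), each such term is $O(\sigma\, n^{k-2}/\pi)$, because $\sigma^j n^{k-1-j} \le \sigma\, n^{k-2}$ when $\sigma \le n$ and $j \ge 1$. The term $c_{k-2} m^{k-2}$ has size $\ll (\sigma/((k-2)!\,\pi))\, n^{k-2} = O(\sigma\, n^{k-2}/\pi)$ directly from \eqref{coeff_P}. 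The key remaining obstacle is the lower-order polynomial coefficients $c_{k-3},\ldots,c_0$ and the periodic sequence $s$: a priori these are not given explicitly, so one needs a uniform bound of the form $|c_i| \ll \sigma/\pi$ for $0 \le i \le k-3$ (with implied constant depending only on $k$) and $|s(m;a_1,\ldots,a_k)| \ll \sigma/\pi$. I would obtain these either from a generating-function expansion of $\prod_{i=1}^k (1-x^{a_i})^{-1}$ around $x=1$ (the coefficients $c_i$ are, up to normalization, values of generalized Bernoulli-type expressions in the $a_i$, each a quasi-polynomial in the $a_i$ of the appropriate weight), or more cheaply by the following counting argument: the number of $(x_1,\ldots,x_k) \in \N_0^k$ with $\sum a_i x_i = n$ is at most the number with each $a_i x_i \le n$, hence $P(n;a_1,\ldots,a_k) \le \prod_{i=1}^k (n/a_i + 1) = \prod_i (n+a_i)/\pi$, which for $n \ge \sigma$ (say) is $\ll n^{k-1}\sigma/\pi$ only after subtracting the main term — so this crude bound alone is not quite enough and the structural estimate on the $c_i$ and on $s$ is genuinely needed.

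To make the argument clean I would isolate the two genuinely new inputs as small lemmas: (i) for fixed $k$, each coefficient $c_i$ in \eqref{form_P} satisfies $c_i = O\bigl(\sigma/\pi\bigr)$ for $i \le k-3$, uniformly in $a_1,\ldots,a_k$; and (ii) the periodic part satisfies $\sup_m |s(m;a_1,\ldots,a_k)| = O(\sigma/\pi)$. For (i), one writes $c_i$ as a symmetric function: from the partial-fraction / residue computation (as in Comtet \cite{Com1974}), $c_i$ is a polynomial in the $a_j$ of degree $k-1-i$ divided by $\pi$, hence $|c_i| \ll \sigma^{k-1-i}/\pi \le \sigma\, n^{k-2-i}/\pi$ again using $\sigma \le n$, and after multiplying by $n^i$ we land in $O(\sigma n^{k-2}/\pi)$. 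For (ii), $s$ equals $P$ minus its polynomial part, and on each residue class its average over a full period $\lcm(a_1,\ldots,a_k)$ is zero; bounding $P$ on a single period and the polynomial part there both by $O(\sigma\,L^{k-2}/\pi)$ where $L = \lcm(a_1,\ldots,a_k)$, and then using $L \le \pi$... actually the cleanest route is to note $s$ is the Fourier-type sum $\sum_{\zeta} (\text{residue at }\zeta)\,\zeta^{-n}$ over roots of unity $\zeta \ne 1$ dividing some $a_j$, each residue being $O(1/\pi)$ in absolute value and there being $O(\sigma)$ such roots counted with multiplicity, giving $|s| = O(\sigma/\pi)$. I expect step (ii), getting the uniform bound on the periodic part with the correct $\sigma/\pi$ dependence rather than a weaker $1/\pi$ or $\sigma$-free bound, to be the main technical obstacle; everything else is binomial-expansion bookkeeping under the standing assumption $a_1 + \cdots + a_k \le n$, which costs nothing since for $\sum a_i > n$ both sides are essentially trivial.
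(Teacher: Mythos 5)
Your overall route is the same as the paper's: reduce to the quasi-polynomial structure \eqref{form_P}--\eqref{coeff_P} via the shift \eqref{connect_N_P}, expand $(n-\sigma)^{k-1}$ where $\sigma:=a_1+\cdots+a_k$, and absorb everything below the leading term into $O(n^{k-2}\sigma/\pi)$ with $\pi:=a_1\cdots a_k$. The paper's own proof is exactly this reduction stated in two sentences, with no discussion of uniformity at all; you have correctly identified that the entire content of the lemma is the uniformity in $a_1,\ldots,a_k$ of the lower coefficients and of the non-polynomial part, and your step (i) is sound: writing $\prod_i(1-x^{a_i})^{-1}=\pi^{-1}(1-x)^{-k}\prod_i a_i(1-x)/(1-x^{a_i})$ and expanding the last product in powers of $1-x$ does give $|c_i|\ll_k \sigma^{k-1-i}/\pi$, hence $|c_i|n^i\ll \sigma n^{k-2}/\pi$ once $\sigma\le n$.

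The genuine gap is your step (ii), and it cannot be closed as stated. The residue heuristic fails twice over: a residue at a primitive $d$-th root of unity $\zeta$ carries factors $(1-\zeta^{-a_i})^{-1}$ whose moduli can be as large as about $d$, so individual residues are nowhere near $O(1/\pi)$; and when some $d>1$ divides two or more of the $a_i$ (which $\gcd(a_1,\ldots,a_k)=1$ does not preclude), the pole at $\zeta$ has order at least $2$, so $P$ minus its polynomial part is a quasi-polynomial of positive degree in $n$, not a bounded periodic sequence — so even \eqref{form_P} as quoted needs qualification. Worse, the bound $\sup_m|s(m;a_1,\ldots,a_k)|=O(\sigma/\pi)$ you are aiming for is false, and with it the lemma in its stated uniform form: for $k=2$ and coprime $a,b$ one has $N(ab;a,b)=0$ while the main term is $1$ and the claimed error is $(a+b)/(ab)\to 0$; for $k=3$ take $(a,a,b)$ with $\gcd(a,b)=1$ and $n=a(b-1)$, where $N(n;a,a,b)=0$ while the main term $(b-1)^2/(2b)$ tends to infinity and the claimed error $(b-1)(2a+b)/(ab)$ stays bounded for $a\ge b$. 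So no amount of bookkeeping rescues the argument without either restricting the range (say $n$ large compared with $\sigma\max_i a_i$) or weakening the error term; the paper's bare citation of Comtet glosses over precisely the point you flagged as the main technical obstacle, and your instinct that this is where the difficulty lives is correct.
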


\begin{proof} According to \eqref{form_P} and \eqref{coeff_P} we have
\begin{equation*}
P(n;a_1,\ldots,a_k) = \frac{n^{k-1}}{(k-1)!a_1\cdots a_k}+ O\left(\frac{n^{k-2}(a_1+\cdots +a_k)}{a_1\cdots a_k} \right).
\end{equation*}

By using \eqref{connect_N_P} we conclude that the same asymptotics holds for $N(n;a_1,\ldots,a_k)$ as well.
\end{proof}

\section{Proofs} \label{Section_Proofs}

\subsection{The sum $A_{k,s}(n)$}

We define the function
\begin{equation*}
\vartheta_{k,s}(n_1,\ldots,n_k)= \begin{cases} 1, & \text{ if $\gcd(n_1\cdots n_s, n_{s+1} \cdots n_k)=1$}, \\ 0, &
\text{ otherwise,}
\end{cases}
\end{equation*}
which is symmetric in the variables $x_1,\ldots,x_s$ and in $x_{s+1},\ldots,x_k$, but not symmetric in all variables. However, $\vartheta_{k,s}$
is a  multiplicative function of $k$ variables. Hence we have
\begin{equation*}
\vartheta_{k,s}(n_1,\ldots,n_k)= \prod_p \vartheta_{k,s} (p^{\nu_p(n_1)}, \ldots,p^{\nu_p(n_k)})
\end{equation*}
for every $n_1,\ldots,n_k\in \N$. Also, for every prime $p$ and every $\nu_1,\ldots,\nu_k\in \N_0$,
\begin{equation} \label{vartheta_val}
\vartheta_{k,s}(p^{\nu_1}, \ldots,p^{\nu_k})= \begin{cases}
1, & \text{ if $\nu_1=\cdots =\nu_s=0$, $\nu_{s+1},\ldots,\nu_k\in \N_0$}, \\
\ & \text{ or $\nu_1,\ldots,\nu_s\in \N_0$, $\nu_{s+1} =\cdots =\nu_k=0$}, \\
0, & \text{ otherwise.}
\end{cases}
\end{equation}

For the multiple Dirichlet series of the function $\vartheta_{k,s}$ we
have the next result.

\begin{lemma} \label{Lemma_Dir_series} If $1 \le s \le k-1$, then
\begin{equation} \label{product_repr}
\sum_{n_1,\ldots,n_k=1}^{\infty}
\frac{\vartheta_{k,s}(n_1,\ldots,n_k)}{n_1^{z_1}\cdots n_k^{z_k}}=
\zeta(z_1)\cdots \zeta(z_k) H_{k,s}(z_1,\ldots,z_k),
\end{equation}
where $\zeta$ is the Riemann zeta function, and 
\begin{equation} \label{H_k_s}
H_{k,s}(z_1,\ldots,z_k)= \prod_p \left(\prod_{1\le j\le s} \left(1-\frac1{p^{z_j}}\right) + \prod_{s+1\le j\le k} \left(1-\frac1{p^{z_j}} \right)
- \prod_{1\le j\le k} \left(1-\frac1{p^{z_j}}\right) \right),
\end{equation}
absolutely convergent if $\Re z_j \ge 0$ \textup{($1\le j\le k$)} and $\Re (z_j+z_{\ell}) >1$ \textup{($1\le j\le s$, $s+1 \le \ell \le k$)}.
In particular, for $z_1=\cdots =z_k=1$,
\begin{equation*}
H_{k,s}(1,\ldots,1)= \prod_p  \left(1- \frac{p F_{k,s}(p)+(-1)^k}{p^k}\right)
\end{equation*}
is absolutely convergent, where $F_{k,s}(p)$ is defined by \eqref{notation_F}.
\end{lemma}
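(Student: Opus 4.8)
\textbf{Proof proposal for Lemma \ref{Lemma_Dir_series}.}

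The plan is to compute the Euler product of the multiple Dirichlet series of $\vartheta_{k,s}$ directly from the explicit values \eqref{vartheta_val}, and then to match the local factors against $\prod_{1\le j\le k}(1-p^{-z_j})^{-1}$ to extract $H_{k,s}$. Since $\vartheta_{k,s}$ is multiplicative, by \eqref{Euler_product} the series equals $\prod_p L_p$, where $L_p = \sum_{\nu_1,\ldots,\nu_k\ge 0} \vartheta_{k,s}(p^{\nu_1},\ldots,p^{\nu_k}) p^{-(\nu_1 z_1+\cdots+\nu_k z_k)}$. From \eqref{vartheta_val}, a term survives exactly when $\nu_1=\cdots=\nu_s=0$, or when $\nu_{s+1}=\cdots=\nu_k=0$; the two cases overlap precisely on the all-zero tuple, which contributes $1$. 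Hence, by inclusion–exclusion on these two events, $L_p = \prod_{s+1\le j\le k}(1-p^{-z_j})^{-1} + \prod_{1\le j\le s}(1-p^{-z_j})^{-1} - 1$, each geometric sum converging when the relevant $\Re z_j\ge 0$ (and strictly when needed). First I would record this identity cleanly.

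Next I would factor out $\prod_{1\le j\le k}(1-p^{-z_j})^{-1}$ from $L_p$. Writing $u_j = 1-p^{-z_j}$ for brevity, we get
\begin{equation*}
L_p = \frac1{u_1\cdots u_k}\left( \prod_{1\le j\le s} u_j + \prod_{s+1\le j\le k} u_j - \prod_{1\le j\le k} u_j \right),
\end{equation*}
which is exactly $\zeta_p(z_1)\cdots\zeta_p(z_k)$ times the $p$-th factor of $H_{k,s}$ as displayed in \eqref{H_k_s}. Taking the product over all primes gives \eqref{product_repr}, provided the product defining $H_{k,s}$ converges absolutely in the stated region. For this I would estimate a single local factor: expanding $\prod_{1\le j\le s} u_j + \prod_{s+1\le j\le k} u_j - \prod_{1\le j\le k} u_j - 1$, the constant term and all terms of the form $\pm p^{-z_j}$ cancel, so the remaining terms each contain at least one factor $p^{-z_j}$ with $j\le s$ and at least one factor $p^{-z_\ell}$ with $\ell\ge s+1$; hence the $p$-th factor of $H_{k,s}$ is $1 + O(p^{-\min_{j\le s,\,\ell>s}\Re(z_j+z_\ell)})$, and absolute convergence follows from $\Re(z_j+z_\ell)>1$ together with $\Re z_j\ge 0$. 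This convergence bookkeeping is the only place requiring care; everything else is the formal Euler-product identity.

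Finally, for the specialization $z_1=\cdots=z_k=1$ I would set $u_j = 1-1/p$ throughout, so the $p$-th factor of $H_{k,s}$ becomes $2(1-1/p)^s$-type expressions — more precisely $(1-1/p)^s + (1-1/p)^{k-s} - (1-1/p)^k$. Multiplying through by $p^k/p^k$ and using the definition \eqref{notation_F} of $F_{k,s}$, namely $xF_{k,s}(x) = x^k+(x-1)^k-x^s(x-1)^{k-s}-x^{k-s}(x-1)^s+(-1)^{k-1}$, one checks by clearing denominators that $(1-1/p)^s+(1-1/p)^{k-s}-(1-1/p)^k = 1 - \bigl(pF_{k,s}(p)+(-1)^k\bigr)/p^k$; this is a routine algebraic identity in $p$ that I would verify by expanding both sides. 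Absolute convergence at this point is the special case of the general region since $z_j+z_\ell = 2 > 1$. I do not anticipate any substantive obstacle: the proof is a direct Euler-product computation, with the main (mild) technical point being the verification that the degree-drop in the local factors of $H_{k,s}$ yields convergence in the claimed half-space.
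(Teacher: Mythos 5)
Your proposal is correct and follows essentially the same route as the paper: expand the Euler product from \eqref{vartheta_val} to get the local factor $\prod_{1\le j\le s}(1-p^{-z_j})^{-1}+\prod_{s+1\le j\le k}(1-p^{-z_j})^{-1}-1$, factor out $\zeta(z_1)\cdots\zeta(z_k)$, and observe that in each local factor of $H_{k,s}$ only monomials containing at least one $p^{-z_j}$ with $j\le s$ and at least one $p^{-z_\ell}$ with $\ell\ge s+1$ survive, whence absolute convergence in the stated region and the specialization at $z_1=\cdots=z_k=1$. The one small wording fix: the reason the surviving monomials are "mixed" is that \emph{all} monomials whose index set lies entirely in $\{1,\ldots,s\}$ or entirely in $\{s+1,\ldots,k\}$ cancel (not merely the constant and the linear terms, as your "so" suggests); with that stated, your argument coincides with the paper's.
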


\begin{proof} The function $\vartheta_{k,s}(n_1,\ldots,n_k)$ is multiplicative, hence its Dirichlet series can be expanded into an
Euler product. Using \eqref{vartheta_val} we deduce
\begin{equation*}
\sum_{n_1,\ldots,n_k=1}^{\infty} \frac{\vartheta_{k,s}(n_1,\ldots,n_k)}{n_1^{z_1}\cdots n_k^{z_k}}=
\prod_p \sum_{\nu_1,\ldots,\nu_k=0}^{\infty}
\frac{\vartheta_{k,s}(p^{\nu_1},\ldots,p^{\nu_k})}{p^{\nu_1 z_1+\cdots +\nu_k z_k}}
\end{equation*}
\begin{equation*}
= \prod_p \left( \sum_{\nu_1,\ldots,\nu_s=0}^{\infty} \frac1{p^{\nu_1 z_1+\cdots +\nu_s z_s}} +
\sum_{\nu_{s+1},\ldots,\nu_k=0}^{\infty} \frac1{p^{\nu_{s+1} z_{s+1} +\cdots +\nu_k z_k}} -1 \right)
\end{equation*}
\begin{equation*}
= \prod_p \left(\prod_{1\le j\le s} \left(1-\frac1{p^{z_j}}\right)^{-1} + \prod_{s+1\le j\le k} \left(1-\frac1{p^{z_j}} \right)^{-1}
- 1 \right),
\end{equation*}
which gives the desired formula by factoring out $\prod_{1\le j\le k} \prod_p \left(1-\frac1{p^{z_j}} \right)^{-1} =\zeta(z_1)\cdots \zeta(z_k)$.
Observe that, performing the multiplications in \eqref{H_k_s}, all terms with $p^e$
cancel out, where $e=z_{i_1}+\cdots +z_{i_j}$ such that $1\le i_1<\cdots <i_j\le s$ or $s+1\le i_1<\cdots <i_j\le k$. This gives the
result on the absolute convergence. For example, if $k=3$ and $s=2$, then
\begin{equation*}
H_{3,2}(z_1,z_2,z_3)=  \prod_p \left( \left(1-\frac1{p^{z_1}}\right) \left(1-\frac1{p^{z_2}}\right)+ \left(1-\frac1{p^{z_3}}\right) - \left(1-\frac1{p^{z_1}}\right)\left(1-\frac1{p^{z_2}}\right)\left(1-\frac1{p^{z_3}}\right)\right)
\end{equation*}
\begin{equation*}
=  \prod_p \left(1- \frac1{p^{z_1+z_3}} -\frac1{p^{z_2+z_3}} + \frac1{p^{z_1+z_2+z_3}}\right),
\end{equation*}
absolutely convergent if $\Re z_j \ge 0$ \textup{($1\le j\le 3$)} and $\Re (z_1+z_3) >1$, $\Re (z_2+z_3) >1$.

If $z_1=\cdots =z_k=1$, then
\begin{equation*}
H_{k,s}(1,\ldots,1)= \prod_p  \left( \left(1-\frac1{p}\right)^s + \left(1-\frac1{p} \right)^{k-s} - \left(1-\frac1{p}\right)^k \right)
\end{equation*}
\begin{equation*}
= \prod_p  \left(1- \frac{p F_{k,s}(p)+(-1)^k}{p^k}\right),
\end{equation*}
by a direct computation.
\end{proof}

\begin{lemma} \label{Lemma_convo} Let $1\le s \le k-1$. For every $n_1,\ldots,n_k\in \N$,
\begin{equation} \label{lalambda}
\vartheta_{k,s}(n_1,\ldots,n_k) = \sum_{d_1\divides n_1, \ldots, d_k\divides
n_k} \lambda_{k,s}(d_1,\ldots,d_k),
\end{equation}
where the function $\lambda_{k,s}$ is multiplicative, and for any prime powers $p^{\nu_1},\ldots,p^{\nu_k}$
\textup{($\nu_1,\ldots,\nu_k\in \N_0$)}
\begin{equation} \label{lambda_val}
\lambda_{k,s}(p^{\nu_1},\ldots,p^{\nu_k})=
\begin{cases} 1, & \nu_1=\cdots=\nu_k=0,\\
(-1)^{\nu_1+\cdots+\nu_k-1}, & \nu_1,\ldots,\nu_k\in \{0,1\}, \\\
\ & \nu_1+\cdots+\nu_s\ge 1, \nu_{s+1}+\cdots +\nu_k\ge 1, \\
0, & \text{otherwise}.
\end{cases}
\end{equation}
\end{lemma}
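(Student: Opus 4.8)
The plan is to obtain the identity \eqref{lalambda} by Möbius inversion of \eqref{product_repr}, and then read off the local values \eqref{lambda_val} directly from the Euler product for the inverting function. Concretely, the Dirichlet series of $\lambda_{k,s}$ must be $H_{k,s}(z_1,\ldots,z_k)$, since dividing \eqref{product_repr} by $\zeta(z_1)\cdots\zeta(z_k)$ corresponds on the coefficient side to convolving $\vartheta_{k,s}$ with $\mu\otimes\cdots\otimes\mu$; equivalently $\vartheta_{k,s} = \1 * \lambda_{k,s}$ in the $k$-variable Dirichlet convolution, where $\1(n_1,\ldots,n_k)\equiv 1$ has Dirichlet series $\zeta(z_1)\cdots\zeta(z_k)$. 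So I would first \emph{define} $\lambda_{k,s}$ by the multiplicative extension of \eqref{lambda_val} and verify that $\1*\lambda_{k,s}=\vartheta_{k,s}$; multiplicativity of both sides (recalled in Section \ref{Section_Preliminaries}, since the convolution of multiplicative functions is multiplicative) reduces this to the single-prime check.

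The prime-power verification is the computational heart, but it is short. Fix a prime $p$ and exponents $\nu_1,\ldots,\nu_k\in\N_0$; I must show
\[
\vartheta_{k,s}(p^{\nu_1},\ldots,p^{\nu_k}) = \sum_{0\le \mu_j\le \nu_j\ (1\le j\le k)} \lambda_{k,s}(p^{\mu_1},\ldots,p^{\mu_k}).
\]
By \eqref{lambda_val} the only nonzero summands have each $\mu_j\in\{0,1\}$, so the sum ranges over subsets $S\subseteq\{j:\nu_j\ge 1\}$. Writing $T=\{j:\nu_j\ge1\}$, $T_1=T\cap\{1,\ldots,s\}$, $T_2=T\cap\{s+1,\ldots,k\}$, the contribution of $S=\varnothing$ is $1$, and the contribution of a nonempty $S\subseteq T$ is $(-1)^{|S|-1}$ precisely when $S$ meets both $\{1,\ldots,s\}$ and $\{s+1,\ldots,k\}$, and $0$ otherwise. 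Hence the right-hand side equals
\[
1 + \sum_{\substack{\varnothing\ne S\subseteq T\\ S\cap T_1\ne\varnothing,\ S\cap T_2\ne\varnothing}} (-1)^{|S|-1}.
\]
If $T_1=\varnothing$ or $T_2=\varnothing$ the sum is empty and we get $1$, matching $\vartheta_{k,s}(p^{\nu_1},\ldots,p^{\nu_k})=1$ from \eqref{vartheta_val}. If both $T_1,T_2\ne\varnothing$, I evaluate the sum by inclusion–exclusion: $\sum_{S\subseteq T,\,S\ne\varnothing}(-1)^{|S|-1} - \sum_{\varnothing\ne S\subseteq T_2}(-1)^{|S|-1} - \sum_{\varnothing\ne S\subseteq T_1}(-1)^{|S|-1}$ (subtracting the subsets avoiding $T_1$, i.e. $S\subseteq T_2$, and those avoiding $T_2$, i.e. $S\subseteq T_1$; the doubly-subtracted empty set is already excluded). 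Each such alternating sum over nonempty subsets of a nonempty set equals $1$, so we get $1 + (1 - 1 - 1) = 0$, matching $\vartheta_{k,s}(p^{\nu_1},\ldots,p^{\nu_k})=0$ from \eqref{vartheta_val}. This completes the identity at every prime power, hence \eqref{lalambda} for all $n_1,\ldots,n_k$.

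I expect no serious obstacle; the only point requiring care is bookkeeping the sign conventions so that the "otherwise" clause of \eqref{lambda_val} is correctly matched against the $\varnothing\ne S$, $S\cap T_1\ne\varnothing$, $S\cap T_2\ne\varnothing$ condition, and checking that the single-variable block $\nu_1,\ldots,\nu_k\in\{0,1\}$ with the stated support condition is exactly what survives. As a sanity check I would verify the formula against the example $k=3$, $s=2$ already computed in Lemma \ref{Lemma_Dir_series}: there $H_{3,2}$ has local factor $1 - p^{-(z_1+z_3)} - p^{-(z_2+z_3)} + p^{-(z_1+z_2+z_3)}$, whose coefficients are $\lambda_{3,2}(1,1,1)=1$, $\lambda_{3,2}(p,1,p)=\lambda_{3,2}(1,p,p)=-1$, $\lambda_{3,2}(p,p,p)=1$, and all other prime-power values $0$ — precisely \eqref{lambda_val} with $k=3$, $s=2$. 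Alternatively, one can bypass the combinatorial sum entirely by expanding the $p$-factor of $H_{k,s}$ in \eqref{H_k_s} and reading the coefficient of $p^{-(\nu_1 z_1+\cdots+\nu_k z_k)}$ directly; I would mention this as the quicker route, since \eqref{H_k_s} already exhibits $H_{k,s}$ as an Euler product whose local factor is a polynomial in the $p^{-z_j}$ with exactly the coefficients claimed in \eqref{lambda_val}.
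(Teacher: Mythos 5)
Your argument is correct, and your \emph{main} route is genuinely different from the paper's. The paper proceeds in the opposite direction: it \emph{defines} $\lambda_{k,s}$ as the function whose Dirichlet series is $H_{k,s}$, so that \eqref{lalambda} is immediate from \eqref{product_repr} together with the fact that $\zeta(z_1)\cdots\zeta(z_k)$ is the Dirichlet series of the constant $\1$ function, and then obtains the values \eqref{lambda_val} by expanding the local Euler factor in \eqref{H_k_s} and identifying coefficients of $p^{-(\nu_1 z_1+\cdots+\nu_k z_k)}$ --- exactly the ``quicker route'' you mention in your last sentence. You instead take \eqref{lambda_val} as the definition and verify $\1*\lambda_{k,s}=\vartheta_{k,s}$ directly at prime-power tuples by inclusion--exclusion over the subsets $S\subseteq T$ meeting both blocks $T_1$ and $T_2$; your bookkeeping is right (the empty set is excluded from all three alternating sums, so no add-back term is needed, and $1+(1-1-1)=0$ matches $\vartheta_{k,s}=0$ in the case $T_1,T_2\ne\varnothing$, while the sum is empty and equals $1$ otherwise, matching \eqref{vartheta_val}). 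What your version buys is a self-contained, purely combinatorial proof of the convolution identity that uses neither the convergence statements nor the Euler-product formalism of Lemma \ref{Lemma_Dir_series}; what the paper's version buys is brevity, since the local factor of $H_{k,s}$ has already been computed and its coefficients are visibly those of \eqref{lambda_val}. Your $k=3$, $s=2$ sanity check against $H_{3,2}$ is also consistent with the paper's worked example.
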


\begin{proof} To deduce \eqref{lalambda}, observe that according to \eqref{product_repr}, the multiplicative function $\vartheta_{k,s}$ is the Dirichlet convolution of the 
constant $1$ function and the function $\lambda_{k,s}$, where 
\begin{equation*}
H_{k,s}(z_1,\ldots,z_k)= \sum_{n_1,\ldots,n_k=1}^{\infty} \frac{\lambda_{k,s}(n_1,\ldots,n_k)}{n_1^{z_1}\cdots n_k^{z_k}}.
\end{equation*}

The function $\vartheta_{k,s}$ is multiplicative, hence $\lambda_{k,s}$ is also multiplicative, 
and 
\begin{equation} \label{H_product}
H_{k,s}(z_1,\ldots,z_k) =\prod_p \sum_{\nu_1,\ldots,\nu_k=0}^{\infty}
\frac{\lambda_{k,s}(p^{\nu_1},\ldots,p^{\nu_k})}{p^{\nu_1 z_1+\cdots +\nu_k z_k}}.
\end{equation}

Identifying the terms of the sums under the products over the primes $p$ in \eqref{H_product} and \eqref{H_k_s}, respectively, 
we conclude \eqref{lambda_val}. Also see the example given in the proof of 
Lemma \ref{Lemma_Dir_series}.
\end{proof}

We will need the following properties of the function $\lambda_{k,s}$.

\begin{lemma} \label{Lemma_prop_lambda}
a) Let $\nu_1,\ldots,\nu_k\ge 0$. Then $\lambda_{k,s}(p^{\nu_1},\ldots,p^{\nu_k})=0$ provided that (i) $\nu_i\ge 2$ for at least one $i$ such that $1\le i\le k$, or (ii) $\nu_i=1$ for
at least one $i$ such that $1\le i\le s$ and $\nu_{s+1}=\cdots=\nu_k=0$, or (iii) $\nu_i=1$ for at least one $i$ with $s+1\le i\le k$ and $\nu_1=\cdots =\nu_s=0$. 

b) $\lambda_{k,s}(p,\ldots,p)=(-1)^{k-1}$ for every prime $p$.

c) Let $j_1,\ldots,j_k,\delta\in \N$. If $\gcd(j_1\cdots j_k,\delta)\ne 1$, then  $\lambda_{k,s}(j_1\delta,\ldots,j_k\delta)=0$. If $\gcd(j_1\cdots j_k,\delta) = 1$, then $\lambda_{k,s}(j_1\delta,\ldots,j_k\delta) = \lambda_{k,s}(j_1,\ldots,j_k) \lambda_{k,s}(\delta,\ldots,\delta)$. 

d) Let $\delta\in \N$. Then $\lambda_{k,s}(\delta,\ldots,\delta) =(-1)^{(k-1)\omega(\delta)}\mu^2(\delta)$.
\end{lemma}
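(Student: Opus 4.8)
The plan is to establish parts (a)–(d) of Lemma~\ref{Lemma_prop_lambda} directly from the explicit formula \eqref{lambda_val} for $\lambda_{k,s}(p^{\nu_1},\ldots,p^{\nu_k})$ obtained in Lemma~\ref{Lemma_convo}, together with the multiplicativity of $\lambda_{k,s}$ stated there. Part (a) is a matter of reading off \eqref{lambda_val}: the only exponent vectors on which $\lambda_{k,s}$ can be nonzero are those with every $\nu_i\in\{0,1\}$ and with $\nu_1+\cdots+\nu_s\ge 1$ \emph{and} $\nu_{s+1}+\cdots+\nu_k\ge 1$ (together with the trivial all-zero vector). Each of the three cases (i), (ii), (iii) violates one of these conditions: (i) has some $\nu_i\ge 2$; (ii) has $\nu_{s+1}+\cdots+\nu_k=0$ while not all $\nu_i$ vanish; (iii) symmetrically has $\nu_1+\cdots+\nu_s=0$ while not all $\nu_i$ vanish. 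So $\lambda_{k,s}$ vanishes on each, and this is essentially immediate.

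For part (b) I would simply substitute $\nu_1=\cdots=\nu_k=1$ into \eqref{lambda_val}. Then all exponents lie in $\{0,1\}$, we have $\nu_1+\cdots+\nu_s=s\ge 1$ and $\nu_{s+1}+\cdots+\nu_k=k-s\ge 1$ (using $1\le s\le k-1$), so the second branch applies and the value is $(-1)^{\nu_1+\cdots+\nu_k-1}=(-1)^{k-1}$.

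For part (c), since $\lambda_{k,s}$ is multiplicative, I would work prime by prime. Fix a prime $p$ and write $a=\nu_p(\delta)$, $b_i=\nu_p(j_i)$; then $\nu_p(j_i\delta)=a+b_i$. If $\gcd(j_1\cdots j_k,\delta)\ne 1$ there is a prime $p$ dividing both $\delta$ and some $j_i$, i.e.\ $a\ge 1$ and $b_i\ge 1$ for that $i$, whence $a+b_i\ge 2$; by part (a)(i) the local factor $\lambda_{k,s}(p^{a+b_1},\ldots,p^{a+b_k})$ is zero, so the whole product vanishes. If instead $\gcd(j_1\cdots j_k,\delta)=1$, then for every prime $p$ at most one of $a,b_1,\ldots,b_k$ is positive; when $a=0$ the local factor is $\lambda_{k,s}(p^{b_1},\ldots,p^{b_k})$ and the local factor coming from $\delta$ alone (all exponents $0$) is $1$, while when $a\ge 1$ all $b_i=0$ so the local factor of $j_1\delta,\ldots,j_k\delta$ equals $\lambda_{k,s}(p^a,\ldots,p^a)$, matching the local factor of $\delta,\ldots,\delta$, and the local factor of $j_1,\ldots,j_k$ is $1$. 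Multiplying over all primes gives the factorization $\lambda_{k,s}(j_1\delta,\ldots,j_k\delta)=\lambda_{k,s}(j_1,\ldots,j_k)\lambda_{k,s}(\delta,\ldots,\delta)$. Finally, part (d): by multiplicativity $\lambda_{k,s}(\delta,\ldots,\delta)=\prod_p \lambda_{k,s}(p^{\nu_p(\delta)},\ldots,p^{\nu_p(\delta)})$. If $\delta$ is not squarefree, some exponent is $\ge 2$ and by (a)(i) the product is $0=\mu^2(\delta)$; if $\delta$ is squarefree, each exponent is $0$ or $1$, the primes $p\mid\delta$ contribute $\lambda_{k,s}(p,\ldots,p)=(-1)^{k-1}$ by part (b), the primes $p\nmid\delta$ contribute $1$, and there are $\omega(\delta)$ of the former, giving $((-1)^{k-1})^{\omega(\delta)}=(-1)^{(k-1)\omega(\delta)}=(-1)^{(k-1)\omega(\delta)}\mu^2(\delta)$. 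The argument is almost entirely bookkeeping; the only point requiring a little care is tracking, in part~(c), which of the exponents $a,b_i$ can be simultaneously nonzero at a given prime under the two coprimality hypotheses, and invoking part~(a) at precisely the right moment — that is the main (mild) obstacle.
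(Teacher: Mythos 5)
Your proof is correct and follows essentially the same route as the paper: parts (a) and (b) are read off from \eqref{lambda_val}, and parts (c) and (d) combine these with the multiplicativity of $\lambda_{k,s}$ (the paper invokes the $k$-variable multiplicativity identity in one stroke where you unwind it prime by prime). One harmless slip: under $\gcd(j_1\cdots j_k,\delta)=1$ it is \emph{not} true that at most one of $a,b_1,\ldots,b_k$ is positive at a given prime (several $b_i$ may be positive simultaneously), but your case analysis only uses the correct dichotomy that $a\ge 1$ forces all $b_i=0$, so nothing breaks.
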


\begin{proof} a), b) Follow from \eqref{lambda_val}.

c) If $\gcd(j_1\cdots j_k,\delta)\ne 1$, then at least one $j_i\delta$ is not squarefree and $\lambda_{k,s}(j_1\delta,\ldots,j_k\delta)=0$ by property a). If $\gcd(j_1\cdots j_k,\delta) = 1$, then $\lambda_{k,s}(j_1\delta,\ldots,j_k\delta) = \lambda_{k,s}(j_1,\ldots,j_k) \lambda_{k,s}(\delta,\ldots,\delta)$ by the multiplicativity of the function $\lambda_{k,s}$. 

d) If $\delta$ is not squarefree, then $\lambda_{k,s}(\delta,\ldots,\delta)=0$ by a). If $\delta=p_1\cdots p_r$ is squarefree, then using multiplicativity and b) we have $\lambda_{k,s}(\delta,\ldots,\delta) = \lambda_{k,s}(p_1,\ldots,p_1) \cdots \lambda_{k,s}(p_r,\ldots,p_r)= (-1)^{(k-1)\omega(\delta)}$.
\end{proof}

\begin{lemma} \label{Lemma_Repr_A_N} Let $1 \le s \le k-1$. For every $n\ge k$ we have
\begin{equation} \label{form_A_k_t}
A_{k,s}(n) = \sum_{\delta \divides n} \mu^2(\delta) (-1)^{(k-1)\omega(\delta)} \sum_{\substack{1\le j_1,\ldots,j_k\le n/\delta \\ \gcd(j_1,\ldots,j_k)=1\\
\gcd(j_1\cdots j_k, \delta)=1 }}  \lambda_{k,s}(j_1,\ldots,j_k) N(n/\delta;j_1,\ldots,j_k).
\end{equation}
\end{lemma}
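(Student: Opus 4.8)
The plan is to start from the definition of $A_{k,s}(n)$ and insert the convolution identity \eqref{lalambda} in exactly the spirit of the direct computation \eqref{proof_R_n_k}. Writing $\vartheta_{k,s}(x_1,\dots,x_k)$ as $\sum_{d_1\mid x_1,\dots,d_k\mid x_k}\lambda_{k,s}(d_1,\dots,d_k)$, we get
\[
A_{k,s}(n)=\sum_{\substack{x_1+\cdots+x_k=n\\ x_i\ge 1}}\ \sum_{d_1\mid x_1,\dots,d_k\mid x_k}\lambda_{k,s}(d_1,\dots,d_k).
\]
Interchanging the order of summation, for each fixed $(d_1,\dots,d_k)$ we substitute $x_i=d_iy_i$ with $y_i\ge 1$, so the inner condition becomes $d_1y_1+\cdots+d_ky_k=n$, and the number of such tuples is precisely $N(n;d_1,\dots,d_k)$ from \eqref{def_N}. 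Hence
\[
A_{k,s}(n)=\sum_{d_1,\dots,d_k\ge 1}\lambda_{k,s}(d_1,\dots,d_k)\,N(n;d_1,\dots,d_k),
\]
the sum being finite since $N(n;d_1,\dots,d_k)=0$ unless each $d_i\le n$.

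Next I would exploit the structure of $\lambda_{k,s}$ described in Lemma \ref{Lemma_prop_lambda}. By part a), $\lambda_{k,s}(d_1,\dots,d_k)=0$ unless every $d_i$ is squarefree, and moreover the nonzero values force the $d_i$ to share a common "core": more precisely, I want to factor out the $\gcd$. Set $\delta=\gcd(d_1,\dots,d_k)$ and write $d_i=j_i\delta$. Part c) of Lemma \ref{Lemma_prop_lambda} tells us that $\lambda_{k,s}(j_1\delta,\dots,j_k\delta)=0$ unless $\gcd(j_1\cdots j_k,\delta)=1$, in which case it factors as $\lambda_{k,s}(j_1,\dots,j_k)\,\lambda_{k,s}(\delta,\dots,\delta)$. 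By part d), $\lambda_{k,s}(\delta,\dots,\delta)=\mu^2(\delta)(-1)^{(k-1)\omega(\delta)}$. The reparametrization $(d_1,\dots,d_k)\leftrightarrow(\delta;j_1,\dots,j_k)$ with $\gcd(j_1,\dots,j_k)=1$ is a bijection onto the set of all $k$-tuples with $\delta\mid\gcd$, so substituting everything and noting $N(n;j_1\delta,\dots,j_k\delta)=N(n/\delta;j_1,\dots,j_k)$ when $\delta\mid n$ (and that $\delta\nmid n$ contributes nothing since the equation $j_1\delta y_1+\cdots=n$ has no solution), we arrive at
\[
A_{k,s}(n)=\sum_{\delta\mid n}\mu^2(\delta)(-1)^{(k-1)\omega(\delta)}\sum_{\substack{j_1,\dots,j_k\ge 1\\ \gcd(j_1,\dots,j_k)=1\\ \gcd(j_1\cdots j_k,\delta)=1}}\lambda_{k,s}(j_1,\dots,j_k)\,N(n/\delta;j_1,\dots,j_k),
\]
and the range $1\le j_i\le n/\delta$ can be imposed since $N(n/\delta;j_1,\dots,j_k)=0$ otherwise, giving exactly \eqref{form_A_k_t}.

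The main obstacle, and the step requiring the most care, is the bijective reparametrization in the second paragraph: one must verify that extracting $\delta=\gcd(d_1,\dots,d_k)$ is compatible with the vanishing pattern of $\lambda_{k,s}$ (so that no spurious terms are created or lost) and that the condition $\gcd(j_1\cdots j_k,\delta)=1$ emerging from Lemma \ref{Lemma_prop_lambda}(c) is automatically consistent with $\gcd(j_1,\dots,j_k)=1$ and with $\delta$ ranging over all squarefree numbers—in particular, that the $\delta\nmid n$ terms genuinely drop out rather than needing separate bookkeeping. Everything else (the interchange of summations, the scaling $x_i=d_iy_i$, and recognizing $N$) is routine. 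I would also remark that \eqref{form_A_k_t} is the precise analogue of \eqref{R_n_k}: when $s$ is such that $\vartheta_{k,s}$ degenerates to the $\gcd=1$ condition one recovers the Möbius expansion, and the extra inner sum over $\lambda_{k,s}$ is what accounts for the weaker-than-total coprimality constraint.
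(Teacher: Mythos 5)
Your proposal is correct and follows essentially the same route as the paper: insert the convolution identity \eqref{lalambda}, interchange summations to recognize $N(n;d_1,\ldots,d_k)$, group the $k$-tuples by $\delta=\gcd(d_1,\ldots,d_k)$, and apply parts c) and d) of Lemma \ref{Lemma_prop_lambda} together with $N(n;j_1\delta,\ldots,j_k\delta)=N(n/\delta;j_1,\ldots,j_k)$. The points you flag as delicate (the vanishing of $\delta\nmid n$ terms and the compatibility of the coprimality conditions) are handled in exactly the same way in the paper's proof.
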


\begin{proof} By the definitions of $A_{k,s}(n)$ and $\vartheta_{k,s}(n_1,\ldots,n_k)$, and by Lemma \ref{Lemma_convo} we have
\begin{equation*}
A_{k,s}(n)= \sum_{\substack{x_1,\ldots,x_k\ge 1\\x_1+\cdots+ x_k=n}} \vartheta_{k,s}(x_1,\ldots,x_k)
= \sum_{\substack{x_1,\ldots,x_k\ge 1\\ x_1+\cdots+ x_k=n}} \sum_{d_1\divides x_1, \ldots, d_k\divides
x_k} \lambda_{k,s}(d_1,\ldots,d_k)
\end{equation*}
\begin{equation*}
= \sum_{1\le d_1,\ldots,d_k\le n} \lambda_{k,s}(d_1,\ldots,d_k)
\sum_{\substack{a_1,\ldots,a_k\ge 1\\ d_1a_1+\cdots+ d_ka_k=n}} 1
\end{equation*}
\begin{equation*}
= \sum_{1\le d_1,\ldots,d_k\le n}  \lambda_{k,s}(d_1,\ldots,d_k) N(n;d_1,\ldots,d_k).
\end{equation*}

Let $\delta=\gcd(d_1,\ldots,d_k)$. If $\delta \notdivides n$, then the equation $d_1a_1+\cdots+ d_ka_k=n$ has no solutions in $a_1,\ldots,a_k$. If
$\delta \divides n$, then $N(n;d_1,\ldots,d_k)= N(n/\delta;d_1/\delta,\ldots,d_k/\delta)$. We deduce, by grouping the terms according to the values
of $\delta$,
\begin{equation*}
A_{k,s}(n) = \sum_{\delta \divides n} \sum_{\substack{1\le d_1,\ldots,d_k\le n\\ \gcd(d_1,\ldots,d_k)=\delta}}  \lambda_{k,s}(d_1,\ldots,d_k)
N(n/\delta;d_1/\delta,\ldots,d_k/\delta)
\end{equation*}
\begin{equation} \label{sum_for_A}
= \sum_{\delta \divides n} \sum_{\substack{1\le j_1,\ldots,j_k\le n/\delta \\ \gcd(j_1,\ldots,j_k)=1}}  \lambda_{k,s}(j_1\delta,\ldots,j_k\delta)
N(n/\delta;j_1,\ldots,j_k).
\end{equation}

Now we use properties c) and d) of Lemma \ref{Lemma_prop_lambda}. If $\gcd(j_1\cdots j_k,\delta)\ne 1$, then $\lambda_{k,s}(j_1\delta,\ldots,j_k\delta)=0$.
If $\gcd(j_1\cdots j_k,\delta) = 1$, then $\lambda_{k,s}(j_1\delta,\ldots,j_k\delta) = \lambda_{k,s}(j_1,\ldots,j_k) \lambda_{k,s}(\delta,\ldots,\delta)$, where $\lambda_{k,s}(\delta,\ldots,\delta)=(-1)^{(k-1)\omega(\delta)}\mu^2(\delta)$.

Inserting these into \eqref{sum_for_A} gives formula \eqref{form_A_k_t}.
\end{proof}

\begin{lemma} \label{Lemma_T_delta} If $1\le s \le k-1$, then for every $\delta \in \N$,
\begin{equation*}
T_{\delta}:= \sum_{\substack{j_1,\ldots,j_k=1 \\ \gcd(j_1,\ldots,j_k)=1\\ \gcd(j_1\cdots j_k,\delta)=1 }}^{\infty}
\frac{\lambda_{k,s}(j_1,\ldots,j_k)}{j_1\cdots j_k}
=  \prod_{p\,\centernot\divides \, \delta} \left(1 - \frac{F_{k,s}(p)}{p^{k-1}} \right).
\end{equation*}
\end{lemma}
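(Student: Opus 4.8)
The plan is to evaluate the triple-constrained sum $T_\delta$ as an Euler product by exploiting the multiplicativity of $\lambda_{k,s}$ and the coprimality conditions. First I would rewrite the condition $\gcd(j_1,\ldots,j_k)=1$ using Möbius inversion over $\N^k$, or — more in the spirit of the preceding lemmas — I would recognize that the sum defining $T_\delta$ is, up to the extra constraint $\gcd(j_1\cdots j_k,\delta)=1$, exactly a partial Dirichlet series of the function $\lambda_{k,s}$ restricted to those $(j_1,\ldots,j_k)$ with trivial gcd. Since $\lambda_{k,s}$ is multiplicative and, by Lemma~\ref{Lemma_prop_lambda}(a), vanishes unless all exponents lie in $\{0,1\}$, the support of $\lambda_{k,s}$ already forces each $j_i$ to be squarefree; in particular the contributions factor over primes.

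The key step is the prime-by-prime analysis. Writing $T_\delta = \prod_p L_p$ where $L_p$ collects the terms coming from the prime $p$, I note that for $p \divides \delta$ the constraint $\gcd(j_1\cdots j_k,\delta)=1$ forces $\nu_p(j_i)=0$ for all $i$, so $L_p = 1$ and such primes drop out — this is the origin of the restricted product $\prod_{p\,\centernot\divides\,\delta}$. For $p \centernot\divides \delta$, I must sum $\lambda_{k,s}(p^{\nu_1},\ldots,p^{\nu_k})/p^{\nu_1+\cdots+\nu_k}$ over all $(\nu_1,\ldots,\nu_k)\in\{0,1\}^k$ subject to the local gcd condition, namely that not all $\nu_i$ are equal to $1$ (the global condition $\gcd(j_1,\ldots,j_k)=1$ localizes to: for each $p$, at least one exponent is $0$). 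Using \eqref{lambda_val}, the empty tuple contributes $1$, and each nonempty admissible tuple $(\nu_1,\ldots,\nu_k)$ with $\nu_1+\cdots+\nu_s\ge 1$ and $\nu_{s+1}+\cdots+\nu_k\ge 1$ contributes $(-1)^{\nu_1+\cdots+\nu_k-1}/p^{\nu_1+\cdots+\nu_k}$, while the tuples lying entirely in the first block or entirely in the second block contribute $0$ (Lemma~\ref{Lemma_prop_lambda}(a)(ii)(iii)); and the all-ones tuple is excluded by the gcd condition. So

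I would compute $L_p$ by inclusion–exclusion on the block structure: letting $x = 1/p$, the sum of $\lambda_{k,s}(p^{\nu_1},\ldots,p^{\nu_k})x^{\nu_1+\cdots+\nu_k}$ over all $(\nu_i)\in\{0,1\}^k$ with both blocks nonempty equals (up to the overall sign convention $(-1)^{\sum\nu_i - 1}$) a combination of $(1-x)^k$, $(1-x)^s$, $(1-x)^{k-s}$, and the constant $1$; then one removes the all-ones term $(-1)^{k-1}x^k$ that the gcd condition forbids, and adds back the empty term $1$ that is always present. Carrying this bookkeeping out and multiplying through by $p^{k-1}$ to clear denominators, the bracket should collapse exactly to $1 - F_{k,s}(p)/p^{k-1}$ with $F_{k,s}$ as in \eqref{notation_F}; indeed $pF_{k,s}(p) = p^k + (p-1)^k - p^s(p-1)^{k-s} - p^{k-s}(p-1)^s + (-1)^{k-1}$ is visibly the right alternating combination of the four products above together with the correction term. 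The main obstacle is purely organizational: getting the signs and the inclusion–exclusion exactly right so that the local factor matches $1 - F_{k,s}(p)/p^{k-1}$ rather than some cosmetically different but equal expression. A clean way to sidestep most of the sign juggling is to observe that $T_\delta$ is the value at $z_1=\cdots=z_k=1$ of the Dirichlet series of $\lambda_{k,s}$ restricted to $\gcd = 1$ and to $\gcd(\cdot,\delta)=1$, and then to relate it to $H_{k,s}$ from Lemma~\ref{Lemma_Dir_series}: removing the diagonal term and the primes dividing $\delta$ from the Euler product \eqref{H_k_s} evaluated at $(1,\ldots,1)$, and using the already-computed $H_{k,s}(1,\ldots,1) = \prod_p(1 - (pF_{k,s}(p)+(-1)^k)/p^k)$, one reads off $T_\delta = \prod_{p\,\centernot\divides\,\delta}\bigl(1 - F_{k,s}(p)/p^{k-1}\bigr)$ after accounting for the removed diagonal contribution $(-1)^{k-1}/p^k$ at each prime. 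Either route, I would finish by noting absolute convergence (each local factor is $1 + O(1/p^2)$, since $F_{k,s}$ has degree $k-3$), so the infinite product and the interchange of summation order are legitimate.
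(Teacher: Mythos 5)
Your proof is correct, but it takes a genuinely different route from the paper's. You factor $T_\delta$ directly into an Euler product $\prod_p L_p$, using that the full summand --- $\lambda_{k,s}$ times the two gcd indicators --- is a multiplicative function of $(j_1,\ldots,j_k)$ (the condition $\gcd(j_1,\ldots,j_k)=1$ localizes to ``not all $\nu_i\ge 1$'', and $\gcd(j_1\cdots j_k,\delta)=1$ kills the primes $p\divides\delta$); the local factor at $p\,\centernot\divides\,\delta$ is then the local factor of $H_{k,s}(1,\ldots,1)$ from \eqref{H_k_s} minus the diagonal term $\lambda_{k,s}(p,\ldots,p)/p^k=(-1)^{k-1}/p^k$, and since $(-1)^k+(-1)^{k-1}=0$ this collapses to $1-F_{k,s}(p)/p^{k-1}$ exactly as you say. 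The paper instead M\"obius-inverts the condition $\gcd(j_1,\ldots,j_k)=1$ via $\sum_{c\divides\gcd}\mu(c)$, substitutes $j_i=cv_i$, factors $\lambda_{k,s}(cv_1,\ldots,cv_k)=\lambda_{k,s}(c,\ldots,c)\lambda_{k,s}(v_1,\ldots,v_k)$ by Lemma \ref{Lemma_prop_lambda}(c), evaluates the inner $v$-sum by Lemma \ref{Lemma_Dir_series}, and only then rearranges the outer $c$-sum into an Euler product; the same cancellation of $(-1)^k$ against $\lambda_{k,s}(p,\ldots,p)=(-1)^{k-1}$ appears at the last step. Your route is shorter and makes the source of the cancellation transparent; its only additional obligation, which you should state explicitly rather than leave implicit, is that the indicator of $\gcd(j_1,\ldots,j_k)=1$ is itself multiplicative in the $k$-variable sense so that the Euler factorization of the constrained, absolutely convergent series is legitimate (absolute convergence at $(1,\ldots,1)$ follows from Lemma \ref{Lemma_Dir_series} since $\Re(z_j+z_\ell)=2>1$ there). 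Your first, ``bookkeeping'' route via inclusion--exclusion on the two blocks is left as a sketch, but the identity it needs --- that the signed sum over tuples with both blocks nonempty equals $(1-x)^k-(1-x)^s-(1-x)^{k-s}+1$ --- does check out against \eqref{notation_F}, so nothing is actually missing.
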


\begin{proof} We have
\begin{equation*}
T_{\delta}= \sum_{\substack{j_1,\ldots,j_k=1 \\ \gcd(j_1\cdots j_k,\delta)=1 }}^{\infty}
\frac{\lambda_{k,s}(j_1,\ldots,j_k)}{j_1\cdots j_k} \sum_{c \divides \gcd(j_1,\ldots,j_k)} \mu(c)
\end{equation*}
\begin{equation*}
 = \sum_{\substack{c,v_1,\ldots,v_k=1 \\ \gcd(v_1\cdots v_k,\delta)=1\\ \gcd(c,\delta)=1}}^{\infty}
\frac{\mu(c) \lambda_{k,s}(cv_1,\ldots,cv_k)}{cv_1\cdots cv_k}.
\end{equation*}

Here we can assume that $c$ is squarefree and we use property c) of Lemma \ref{Lemma_prop_lambda} (with $c$ instead of $\delta$ and with $v_1,\ldots,v_k$ instead of $j_1,\ldots,j_k$). We obtain that

\begin{equation*}
T_{\delta}= \sum_{\substack{c=1\\ (c,\delta)=1}}^{\infty} \frac{\mu(c)\lambda_{k,s}(c,\ldots,c)}{c^k}
\sum_{\substack{v_1,\ldots,v_k=1 \\ \gcd(v_1\cdots v_k,c\delta)=1}}^{\infty}
\frac{\lambda_{k,s}(v_1,\ldots,v_k)}{v_1\cdots v_k}.
\end{equation*}

Now from Lemma \ref{Lemma_Dir_series} we have
\begin{equation*}
T_{\delta} = \sum_{\substack{c=1\\ (c,\delta)=1}}^{\infty} \frac{\mu(c)\lambda_{k,s}(c,\ldots,c)}{c^k} \prod_{p\, \centernot\divides \, c\delta}
\left(1- \frac{p F_{k,s}(p)+(-1)^k}{p^k} \right)
\end{equation*}
\begin{equation*}
= \prod_{p\, \centernot\divides \, \delta} \left(1- \frac{p F_{k,s}(p)+(-1)^k}{p^k} \right)  \sum_{\substack{c=1\\ (c,\delta)=1}}^{\infty} \frac{\mu(c)\lambda_{k,s}(c,\ldots,c)}{c^k} \prod_{p \divides c} \left(1- \frac{p F_{k,s}(p)+(-1)^k}{p^k} \right)^{-1}
\end{equation*}
\begin{equation*}
= \prod_{p\, \centernot\divides \, \delta} \left(1- \frac{p F_{k,s}(p)+(-1)^k}{p^k} \right) \prod_{p\, \centernot\divides \, \delta}
\left(1- \frac{\lambda_{k,s}(p,\ldots,p)}{p^k \left(1- \frac{p F_{k,s}(p)+(-1)^k}{p^k}  \right) }\right)
= \prod_{p\, \centernot\divides \, \delta}  \left(1 - \frac{F_{k,s}(p)}{p^{k-1}} \right),
\end{equation*}
using that $\lambda_{k,s}(p,\ldots,p)=(-1)^{k-1}$ (see property b) of Lemma \ref{Lemma_prop_lambda}).
\end{proof}

\begin{proof}[Proof of Theorem {\rm \ref{Theorem_A}}]

By applying Lemma \ref{Lemma_Partition} we have
\begin{equation*}
N(n/\delta;j_1,\ldots,j_k)= \frac{(n/\delta)^{k-1}}{(k-1)!j_1\cdots j_k}+  O\left(\frac{(n/\delta)^{k-2}(j_1+\cdots +j_k)}{j_1\cdots j_k} \right),
\end{equation*}
and inserting it into \eqref{form_A_k_t} gives
\begin{equation} \label{A_to_go}
A_{k,s}(n) = \frac{n^{k-1}}{(k-1)!}  \sum_{\delta \divides n} \frac{\mu^2(\delta)}{\delta^{k-1}} (-1)^{(k-1)\omega(\delta)} \sum_{\substack{j_1,\ldots,j_k\le n/\delta \\ \gcd(j_1,\ldots,j_k)=1\\ \gcd(j_1\cdots j_k,\delta)=1 }} 
\frac{\lambda_{k,s}(j_1,\ldots,j_k)}{j_1\cdots j_k} 
\end{equation}
\begin{equation} \label{A_error}
+ O\left(n^{k-2} \sum_{\delta \divides n} \frac{\mu^2(\delta)}{\delta^{k-2}} \sum_{j_1,\ldots,j_k\le n/\delta}
\frac{(j_1+\cdots +j_k)|\lambda_{k,s}(j_1,\ldots,j_k)|}{j_1\cdots j_k} \right).
\end{equation} 

The inner sum $I(n,\delta)$ in \eqref{A_to_go} can be written as
\begin{equation}  \label{def_I_R}
I(n,\delta): = \sum_{\substack{j_1,\ldots,j_k\le n/\delta \\ \gcd(j_1,\ldots,j_k)=1\\ \gcd(j_1\cdots j_k,\delta)=1 }} 
\frac{\lambda_{k,s}(j_1,\ldots,j_k)}{j_1\cdots j_k}  = \sum_{\substack{j_1,\ldots,j_k =1 \\ \gcd(j_1,\ldots,j_k)=1\\ 
\gcd(j_1\cdots j_k,\delta)=1 }}^{\infty} \frac{\lambda_{k,s}(j_1,\ldots,j_k)}{j_1\cdots j_k} + R(n,\delta), 
\end{equation}
where for the remainder term $R$ one has
\begin{equation} \label{first_est_R}
R(n,\delta) \ll  \sideset{}{'}\sum_{j_1,\ldots,j_k} \frac{|\lambda_{k,s}(j_1,\ldots,j_k)|}{j_1\cdots j_k}
\end{equation}
with $\sum^{'}$ meaning that $j_1,\ldots, j_k\in \N$ and $j_1,\ldots,j_k\le n/\delta$ does not hold, that is, there exists at least one $i$ 
($1\le i\le k$) such that $j_i>n/\delta$ (the gcd conditions can be omitted).

Hence, according to \eqref{A_to_go}, \eqref{def_I_R} and using Lemma \ref{Lemma_T_delta} the main term for $A_{k,s}(n)$ is 
\begin{equation*}
\frac{n^{k-1}}{(k-1)!} \prod_p  \left(1 - \frac{F_{k,s}(p)}{p^{k-1}} \right) \sum_{\delta \divides n} \frac{\mu^2(\delta)}{\delta^{k-1}}
(-1)^{(k-1)\omega(\delta)} \prod_{p\divides \delta}  \left(1 - \frac{F_{k,s}(p)}{p^{k-1}} \right)^{-1}
\end{equation*}
\begin{equation*}
= \frac{n^{k-1}}{(k-1)!}  \prod_p  \left(1 - \frac{F_{k,s}(p)}{p^{k-1}} \right)
\prod_{p\divides n} \left(1 + \frac{(-1)^{k-1}}{p^{k-1}}  \left(1 - \frac{F_{k,s}(p)}{p^{k-1}} \right)^{-1} \right)
\end{equation*}
\begin{equation*}
= \frac{n^{k-1}}{(k-1)!}  \prod_p  \left(1 - \frac{F_{k,s}(p)}{p^{k-1}} \right)
\prod_{p\divides n} \left(1 + \frac{(-1)^{k-1}}{p^{k-1}- F_{k,s}(p)} \right)
\end{equation*}
\begin{equation*}
=\frac{n^{k-1}}{(k-1)!} C_{k,s} f_{k,s}(n).
\end{equation*}

To estimate the error under \eqref{A_error} note that for any $i$ ($1\le i\le k$),
\begin{equation*}
S:= \sum_{j_1,\ldots,j_k\le x} \frac{j_i|\lambda_{k,s}(j_1,\ldots,j_k)|}{j_1\cdots j_k} =
\sum_{j_1,\ldots,j_k\le x} \frac{|\lambda_{k,s}(j_1,\ldots,j_k)|}{j_1\cdots j_{i-1} j_{i+1}\cdots j_k}
\end{equation*}
\begin{equation*}
\le \prod_{p\le x} \sum_{\nu_1,\ldots,\nu_k=0}^{\infty} \frac{|\lambda_{k,s}(p^{\nu_1},\ldots,p^{\nu_k})|}{p^{\nu_1+\cdots
+\nu_{i-1}+\nu_{i+1}+\cdots + \nu_k}}
\end{equation*}
\begin{equation*}
= \prod_{p\le x} \left(1+\frac{c_1}{p}+ \frac{c_2}{p^2}+\cdots +\frac{c_{k-1}}{p^{k-1}}\right),
\end{equation*}
by \eqref{lambda_val}, where $c_1,\ldots,c_{k-1}$ are certain positive
integers. We need the precise value of $c_1$. If $1\le i\le s$, then we have $p$ in the denominator if and only
if $\nu_i=1$ and exactly one of $\nu_{s+1},\ldots,\nu_k$ is $1$, all other values $\nu_j$ being $0$, which occurs $k-s$ times.
That is, $c_1=k-s$. In the case $s+1\le i\le k$, the denominator is $p$ if and only
if $\nu_i=1$ and exactly one of $\nu_1,\ldots,\nu_s$ is $1$, all other
values $\nu_j$ being $0$, which occurs $s$ times. So, in this case $c_1=s$.

We deduce that
\begin{equation} \label{estimate_S_i}
S\ll  \prod_{p\le x} \left(1+\frac1{p}\right)^{\max(s,k-s)} \ll (\log x)^{\max(s,k-s)},
\end{equation}
by Mertens' theorem. For $x=n/\delta$ we obtain that the error in \eqref{A_error} is
\begin{equation} \label{result_R_2}
 \ll n^{k-2} \sum_{\delta \divides n} \frac{\mu^2(\delta)}{\delta^{k-2}} (\log n/\delta)^{\max(s,k-s)} \le n^{k-2} (\log n)^{\max(s,k-s)}
\prod_{p\divides n} \left(1+\frac1{p^{k-2}} \right),
\end{equation}
where $P_k:= \prod_{p\divides n} \left(1+\frac1{p^{k-2}}\right) \ll 1$ if $k\ge 4$ and $P_3=\prod_{p\divides n} \left(1+\frac1{p} \right)=
\frac{n}{\varphi(n)} \prod_{p\divides n} \left(1-\frac1{p^2} \right) \ll \log\log n$ if $k=3$.

Now we estimate the error $R(n,\delta)$ from \eqref{first_est_R}. Consider the sums
\begin{equation*}
\sideset{}{'}\sum_{j_1,\ldots,j_k} \frac{|\lambda_{k,s}(j_1,\ldots,j_k)|}{j_1\cdots j_k}
\end{equation*}
with $n/\delta=x$, and assume that $j_i>x$  for $i\in I\subseteq \{1,\ldots,k\}$ and $j_{\ell}\le x$ for $\ell \notin I$. That is, we need to
estimate the sum
\begin{equation*}
U:= \sum_{\substack{j_i>x, \, i\in I\\ j_{\ell}\le x, \, \ell \notin I}} \frac{|\lambda_{k,s}(j_1,\ldots,j_k)|}{j_1\cdots j_k}.
\end{equation*}

Let $m=\# I$. We distinguish the following cases:

Case i) $m\ge 3$: if $0<\varepsilon <1/2$, then
\begin{equation*}
U = \sum_{\substack{j_i>x, \, i\in I\\ j_{\ell}\le x, \, \ell \notin I}} \frac{|\lambda_{k,s}(j_1,\ldots,j_k)|
\prod_{i\in I} j_i^{\varepsilon-1/2}}{\prod_{i\in I} j_i^{\varepsilon+1/2} \prod_{\ell \notin I} j_{\ell}}
\end{equation*}
\begin{equation*}
\le x^{m(\varepsilon-1/2)} \sum_{j_1,\ldots,j_k=1}^{\infty} \frac{|\lambda_{k,s}(j_1,\ldots,j_k)|}
{\prod_{i\in I} j_i^{\varepsilon+1/2} \prod_{\ell \notin I} j_{\ell}}\ll x^{m(\varepsilon-1/2)},
\end{equation*}
since the latter series is convergent by Lemma \ref{Lemma_Dir_series}. Using that
$m(\varepsilon -1/2)< -1$ for $0<\varepsilon< (m-2)/(2m)$, here we
need $m\ge 3$, we obtain that $U\ll \frac1{x}$.

Case ii) $m=1$: Assume that $I=\{a\}$, $j_a>x$ and $j_{\ell} \le x$ for all $\ell \ne a$. Consider a prime $p$.
If $p\divides j_{\ell}$ for some $\ell \ne a$, then $p\le x$. If $p\divides j_a$ and $p>x$, then $p\notdivides j_{\ell}$ for every
$\ell \ne a$ and $\lambda_{k,s}(j_1,\ldots,j_k)=0$ by its definition \eqref{lambda_val}. Hence it is enough to select the primes
$p\le x$. We deduce
\begin{equation*}
U < \frac1{x} \sum_{\substack{j_a > x\\ j_{\ell} \le x, \, \ell \ne a}}
\frac{|\lambda_{k,s}(j_1,\ldots,j_k)|}{j_1\cdots j_{a-1}j_{a+1}\cdots j_k}
\end{equation*}
\begin{equation*}
\le \frac1{x} \prod_{p\le x} \sum_{\nu_1,\ldots, \nu_k=0}^{\infty}
\frac{|\lambda_{k,s}(p^{\nu_1},\ldots,p^{\nu_k})|}{p^{\nu_1+\cdots +\nu_{a-1}+\nu_{a+1}+ \cdots + \nu_k}} \ll \frac1{x}(\log x)^{\max(s,k-s)},
\end{equation*}
similar to the estimate \eqref{estimate_S_i}.

Case iii) $m=2$: Assume that $I=\{a,b\}$, that is, $j_a,j_b>x$ and $j_{\ell} \le x$ for all $\ell \ne a,b$. Furthermore, suppose that
$1\le a,b\le s$ or $s+1\le a,b\le k$. Let $p$ be a prime. If $p\divides j_{\ell}$ for some $\ell \ne a,b$, then $p\le x$. If $p\divides j_a$
or $p\divides j_b$, and $p>x$, then $p\notdivides j_{\ell}$ for every $\ell \ne a,b$ and $\lambda_{k,s}(j_1,\ldots,j_k)=0$ by \eqref{lambda_val}.
So, it is enough to consider the primes $p\le x$. Similar to the case ii) we deduce that
\begin{equation*}
U < \frac1{x} \sum_{\substack{j_a, j_b > x\\ j_{\ell} \le x, \, \ell \ne a,b}}
\frac{|\lambda_{k,s}(j_1,\ldots,j_k)|}{j_1\cdots j_{a-1}j_{a+1}\cdots j_k}
\end{equation*}
\begin{equation*}
\le \frac1{x} \prod_{p\le x} \sum_{\nu_1,\ldots, \nu_k=0}^{\infty}
\frac{|\lambda_{k,s}(p^{\nu_1},\ldots,p^{\nu_k})|}{p^{\nu_1+\cdots +\nu_{a-1}+\nu_{a+1}+ \cdots + \nu_k}} \ll \frac1{x}(\log x)^{\max(s,k-s)}.
\end{equation*}

Hence, we need to handle one more case, namely when $j_a,j_b>x$, where $1\le a\le s$, $s+1\le b\le k$ and $j_{\ell} \le x$ for all $\ell \ne a,b$.
We split the sum $U$ into two sums, namely
\begin{equation*}
U = \sum_{\substack{j_a, j_b>x \\ j_{\ell} \le x, \, \ell \ne a,b}}
\frac{|\lambda_{k,s}(j_1,\ldots,j_k)|}{j_1\cdots j_k}
\end{equation*}
\begin{equation*}
= \sum_{\substack{j_a> x^{3/2}, \, j_b>x\\ j_{\ell} \le x, \, \ell \ne a,b}}
\frac{|\lambda_{k,s}(j_1,\ldots,j_k)|}{j_1\cdots j_k} +
\sum_{\substack{x^{3/2}\ge j_a> x, \, j_b>x\\ j_{\ell} \le x, \, \ell \ne a,b }}
\frac{|\lambda_{k,s}(j_1,\ldots,j_k)|}{j_1\cdots j_k} =: U_1+U_2,
\end{equation*}
say, where
\begin{equation*}
U_1= \sum_{\substack{j_a> x^{3/2}, \, j_b >x\\ j_{\ell} \le x, \, \ell \ne a,b }}
\frac{|\lambda_{k,s}(j_1,\ldots,j_k)|}{j_a^{1/3} \prod_{\ell \ne a} j_{\ell}}
\frac1{j_a^{2/3}}
\end{equation*}
\begin{equation*}
\le \frac1{x} \sum_{\substack{j_1,\ldots,j_k=1}}^{\infty}
\frac{|\lambda_{k,s}(j_1,\ldots,j_k)|}{j_a^{1/3} \prod_{\ell \ne a} j_{\ell}}
\ll \frac1{x},
\end{equation*}
since the series is convergent.

At the same time,
\begin{equation*}
U_2 \le \frac1{x} \sum_{\substack{x^{3/2}\ge j_a, \, j_b>x \\ j_{\ell} \le x, \, \ell \ne a,b }}
\frac{|\lambda_{k,s}(j_1,\ldots,j_k)|}{\prod_{1\le \ell \le k, \ell \ne b} j_{\ell}}
\end{equation*}
where $j_a\le x^{3/2}$, $j_b>x$, $j_{\ell} \le x$ ($\ell \ne a, b$).
Let $p$ be a  prime. If $p\divides j_{\ell}$ for an $\ell \in \{1,\ldots,k\}\setminus \{b\} $,
then $p\le x^{3/2}$. If $p\divides j_b$ and $p>x^{3/2}$, then $p\notdivides
j_{\ell}$ for every $\ell \ne b$ and $\lambda_{k,s}(j_1,\ldots,j_k)=0$ by its definition. Hence it is enough
to take the primes $p\le x^{3/2}$. We deduce, as above,
\begin{equation*}
U_2 \le \frac1{x} \prod_{p\le x^{3/2}} \sum_{\nu_1,\ldots, \nu_k=0}^{\infty}
\frac{|\lambda_{k,s}(p^{\nu_1},\ldots,p^{\nu_k})|}{p^{\nu_1+\cdots +\nu_{b-1}+\nu_{b+1} +\cdots
+\nu_k}}
\end{equation*}
\begin{equation*}
\ll \frac1{x}(\log x^{3/2})^{\max(s,k-s)} \ll \frac1{x}(\log
x)^{\max(s,k-s)}.
\end{equation*}

Hence for every $m\ge 1$,
\begin{equation*}
U \ll \frac1{x}(\log x)^{\max(s,k-s)},
\end{equation*}
which applied for $x=n/\delta$ gives by \eqref{A_to_go}, \eqref{def_I_R} and \eqref{first_est_R} that the error coming from $R(n,\delta)$ is
\begin{equation*}
\ll n^{k-1} \sum_{\delta \mid n} \frac{\mu^2(\delta)}{\delta^{k-1}} R(n,\delta) \ll n^{k-2} \sum_{\delta \divides n} \frac{\mu^2(\delta)}{\delta^{k-2}} (\log n/\delta)^{\max(s,k-s)},
\end{equation*}
leading to the same estimate as that for the error \eqref{A_error} obtained in \eqref{result_R_2}.

This finishes the proof of Theorem \ref{Theorem_A}.
\end{proof}

\subsection{The sum $B_{k,t}(n)$}

Similar to the proof of Theorem \ref{Theorem_A}, consider the characteristic function of the $k$-tuples of positive integers with
$t$-wise relatively prime components, i.e., let
\begin{equation*}
\varrho_{k,t}(n_1,\ldots,n_k)= \begin{cases} 1, & \text{ if $\gcd(n_{i_1},\ldots n_{i_t})=1$ for $1\le i_1<\ldots <i_t\le k$},
\\ 0, &
\text{ otherwise.}
\end{cases}
\end{equation*}

The function $\varrho_{k,t}(n_1,\ldots,n_k)$ is symmetric in all variables and it is a multiplicative function of $k$ variables.
For every prime $p$ and every $\nu_1,\ldots,\nu_r\in \N_0$ one has
\begin{equation*}
\varrho_{k,t}(p^{\nu_1}, \ldots,p^{\nu_k})= \begin{cases} 1, &
\text{if there are at most $t-1$ values $\nu_i\ge 1$}, \\ 0, &
\text{otherwise.}
\end{cases}
\end{equation*}

The following result on the multiple Dirichlet series of the function $\varrho_{k,t}$ was
proved by the author \cite{Tot2016}. Let $e_j(x_1,\dots,x_k)=\sum_{1\le i_1<\ldots<i_j\le k}
x_{i_1}\cdots x_{i_j}$ denote the elementary symmetric polynomials
in $x_1,\ldots,x_k$ of degree $j$.

\begin{lemma}[{\cite[Th.\ 2.1]{Tot2016}}] \label{Lemma_Dir_series_rho} Let $2\le t\le k$. Then
\begin{equation*}
\sum_{n_1,\ldots,n_k=1}^{\infty} \frac{\varrho_{k,t}(n_1,\ldots,n_k)}{n_1^{z_1}\cdots n_r^{z_k}}=
\zeta(z_1)\cdots \zeta(z_k) L_{k,t}(z_1,\ldots,z_k),
\end{equation*}
where
\begin{equation*}
L_{k,t}(z_1,\ldots,z_k)= \prod_p \left( 1- \sum_{j=t}^k (-1)^{j-t} \binom{j-1}{t-1} e_j(p^{-z_1},\ldots,p^{-z_k})\right)
\end{equation*}
is absolutely convergent if $\Re (z_{i_1}+\cdots+ z_{i_j})>1$ for every $1\le i_1< \ldots < i_j\le k$ with $t\le j\le
k$.

In particular, for $z_1=\cdots =z_k=1$,
\begin{equation*}
L_{k,t}(1,\ldots,1)= \prod_p  \frac{pG_{k,t}(p)+(-1)^{k-t+1}\binom{k-1}{t-1}}{p^k}
\end{equation*}
is absolutely convergent, where $G_{k,t}(p)$ is defined by \eqref{notation_G}.
\end{lemma}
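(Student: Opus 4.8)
The plan is to use that $\varrho_{k,t}$ is a multiplicative function of $k$ variables, reduce the identity to a prime‑by‑prime statement, and then put the local factor into closed form by a short inclusion–exclusion together with a standard binomial identity. Since $\varrho_{k,t}$ is multiplicative, on the region $\RE z_j>1$ $(1\le j\le k)$ — where the left‑hand Dirichlet series converges absolutely by the criterion of Delange \cite{Del1969} recalled above, since there the local contributions are $O(p^{-\min_j\RE z_j})$ — it expands as the Euler product \eqref{Euler_product}; the same region makes $\zeta(z_1)\cdots\zeta(z_k)$ converge, so it suffices to prove the asserted factorization at each prime. Fix $p$ and write $x_j=p^{-z_j}$. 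Using that $\varrho_{k,t}(p^{\nu_1},\ldots,p^{\nu_k})=1$ precisely when at most $t-1$ of the $\nu_i$ are positive, and grouping the exponent vectors by their support $S=\{i:\nu_i\ge 1\}$, the local factor is
\begin{equation*}
\sum_{\substack{\nu_1,\ldots,\nu_k\ge 0\\ \#\{i:\nu_i\ge 1\}\le t-1}}x_1^{\nu_1}\cdots x_k^{\nu_k}
=\sum_{\substack{S\subseteq\{1,\ldots,k\}\\ |S|\le t-1}}\prod_{i\in S}\frac{x_i}{1-x_i}.
\end{equation*}
Pulling out $\prod_{j=1}^{k}(1-x_j)^{-1}$, which over all primes produces $\zeta(z_1)\cdots\zeta(z_k)$, leaves for the local factor of $L_{k,t}$ the expression $\Phi:=\sum_{m=0}^{t-1}\sum_{|S|=m}\prod_{i\in S}x_i\prod_{i\notin S}(1-x_i)$.

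Next I would bring $\Phi$ to the stated form. Since $\sum_{m=0}^{k}\sum_{|S|=m}\prod_{i\in S}x_i\prod_{i\notin S}(1-x_i)=\prod_{j=1}^{k}\bigl(x_j+(1-x_j)\bigr)=1$, we get $\Phi=1-\sum_{m=t}^{k}\sum_{|S|=m}\prod_{i\in S}x_i\prod_{i\notin S}(1-x_i)$. Expanding each factor $1-x_i$ for $i\notin S$ and collecting, the coefficient of a squarefree monomial $\prod_{i\in U}x_i$ with $|U|=j$ comes from choosing $S\subseteq U$ with $t\le|S|\le j$, hence equals $\sum_{m=t}^{j}(-1)^{j-m}\binom{j}{m}=\sum_{r=0}^{j-t}(-1)^{r}\binom{j}{r}=(-1)^{j-t}\binom{j-1}{j-t}=(-1)^{j-t}\binom{j-1}{t-1}$, by the partial‑sum identity $\sum_{r=0}^{\ell}(-1)^{r}\binom{j}{r}=(-1)^{\ell}\binom{j-1}{\ell}$. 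Summing over $U$ gives $\Phi=1-\sum_{j=t}^{k}(-1)^{j-t}\binom{j-1}{t-1}e_j(x_1,\ldots,x_k)$, which is exactly the claimed Euler factor of $L_{k,t}$.

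For absolute convergence of $L_{k,t}$, note that each $e_j(p^{-z_1},\ldots,p^{-z_k})$ is a sum of $\binom{k}{j}$ terms $p^{-(z_{i_1}+\cdots+z_{i_j})}$, so under the hypothesis $\RE(z_{i_1}+\cdots+z_{i_j})>1$ for all $t\le j\le k$ and all $j$‑subsets one has $\sum_p|e_j(p^{-z_1},\ldots,p^{-z_k})|<\infty$; hence the local factors of $L_{k,t}$ are $1+O(p^{-1-\varepsilon})$ with a summable error, and the Euler product converges absolutely by the usual criterion for $\prod(1+a_p)$. Finally, at $z_1=\cdots=z_k=1$ we have $e_j(p^{-1},\ldots,p^{-1})=\binom{k}{j}p^{-j}$, so the local factor is $1-\sum_{j=t}^{k}(-1)^{j-t}\binom{k}{j}\binom{j-1}{t-1}p^{-j}$; multiplying by $p^{k}$ and comparing with the second formula for $G_{k,t}$ in \eqref{notation_G} — the $j=k$ term supplying exactly the constant $(-1)^{k-t+1}\binom{k-1}{t-1}$ — yields $\bigl(pG_{k,t}(p)+(-1)^{k-t+1}\binom{k-1}{t-1}\bigr)/p^{k}$, its absolute convergence being the case $z_j=1$ of the previous sentence (legitimate since $t\ge 2$, so the exponents $z_{i_1}+\cdots+z_{i_j}$ involved are $\ge 2$).

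The computation is essentially routine; the only place demanding care is the inclusion–exclusion bookkeeping in the second paragraph and the invocation of $\sum_{r=0}^{\ell}(-1)^{r}\binom{j}{r}=(-1)^{\ell}\binom{j-1}{\ell}$ — keeping track of the coefficient of the correct monomial $\prod_{i\in U}x_i$ is what makes the closed form fall out cleanly.
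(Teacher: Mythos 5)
Your argument is correct: the Euler-product expansion, the support-based inclusion--exclusion giving the coefficient $(-1)^{j-t}\binom{j-1}{t-1}$ of $e_j$ via the partial alternating sum $\sum_{r=0}^{\ell}(-1)^{r}\binom{j}{r}=(-1)^{\ell}\binom{j-1}{\ell}$, and the specialization at $z_1=\cdots=z_k=1$ matching \eqref{notation_G} all check out. The paper itself gives no proof here, merely citing \cite[Th.\ 2.1]{Tot2016}, and your derivation is the standard route taken in that reference, so there is nothing further to compare.
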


\begin{lemma} \label{Lemma_convo_rho} Let $2\le t\le k$. Then for every $n_1,\ldots,n_k\in \N$,
\begin{equation*}
\varrho_{k,t}(n_1,\ldots,n_k) = \sum_{d_1\divides n_1, \ldots, d_k\divides
n_k} \psi_{k,t}(d_1,\ldots,d_k),
\end{equation*}
where the function $\psi_{k,t}$ is multiplicative, and for any prime powers $p^{\nu_1},\ldots,p^{\nu_k}$ \textup{($\nu_1,\ldots,\nu_k\in \N_0$)},
\begin{equation} \label{psi}
\psi_{k,t}(p^{\nu_1},\ldots,p^{\nu_k})=
\begin{cases} 1, & \nu_1=\ldots=\nu_k=0,\\
(-1)^{j-t+1}\binom{j-1}{t-1}, & \nu_1,\ldots,\nu_k\in \{0,1\}, \ j:=\nu_1+\ldots+\nu_k \ge t,\\
0, & \text{otherwise}.
\end{cases}
\end{equation}
\end{lemma}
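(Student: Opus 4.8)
The plan is to proceed exactly as in the proof of Lemma \ref{Lemma_convo}, now invoking Lemma \ref{Lemma_Dir_series_rho} in place of Lemma \ref{Lemma_Dir_series}. The factorization in Lemma \ref{Lemma_Dir_series_rho} says precisely that, as a formal identity of multiple Dirichlet series,
\[
\sum_{n_1,\ldots,n_k=1}^{\infty} \frac{\varrho_{k,t}(n_1,\ldots,n_k)}{n_1^{z_1}\cdots n_k^{z_k}} = \zeta(z_1)\cdots\zeta(z_k)\, L_{k,t}(z_1,\ldots,z_k),
\]
where $L_{k,t}$ is, by definition, the Dirichlet series of a uniquely determined arithmetic function $\psi_{k,t}\colon \N^k\to\C$. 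Since $\zeta(z_1)\cdots\zeta(z_k)$ is the Dirichlet series of the constant $1$ function of $k$ variables, this identity is equivalent to $\varrho_{k,t}=1*\psi_{k,t}$, which is the asserted convolution formula. The constant $1$ function is multiplicative and invertible under Dirichlet convolution, and $\varrho_{k,t}$ is multiplicative; hence $\psi_{k,t}$, being the convolution of $\varrho_{k,t}$ with the (multiplicative) Dirichlet inverse of $1$, is itself a multiplicative function of $k$ variables.

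It remains to read off the values $\psi_{k,t}(p^{\nu_1},\ldots,p^{\nu_k})$. Because $\psi_{k,t}$ is multiplicative, its Dirichlet series expands into the Euler product
\[
L_{k,t}(z_1,\ldots,z_k)=\prod_p \sum_{\nu_1,\ldots,\nu_k=0}^{\infty} \frac{\psi_{k,t}(p^{\nu_1},\ldots,p^{\nu_k})}{p^{\nu_1 z_1+\cdots+\nu_k z_k}},
\]
which we match term by term with the explicit local factor
\[
1-\sum_{j=t}^{k} (-1)^{j-t}\binom{j-1}{t-1}\, e_j(p^{-z_1},\ldots,p^{-z_k})
\]
given in Lemma \ref{Lemma_Dir_series_rho}. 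Writing $e_j(p^{-z_1},\ldots,p^{-z_k})=\sum_{1\le i_1<\cdots<i_j\le k} p^{-(z_{i_1}+\cdots+z_{i_j})}$, we see that this local factor is a polynomial in $p^{-z_1},\ldots,p^{-z_k}$ of degree at most $1$ in each variable; hence $\psi_{k,t}(p^{\nu_1},\ldots,p^{\nu_k})=0$ whenever some $\nu_i\ge 2$, and the constant term gives $\psi_{k,t}(1,\ldots,1)=1$. For a tuple $(\nu_1,\ldots,\nu_k)\in\{0,1\}^k$ with $j:=\nu_1+\cdots+\nu_k\ge 1$, the corresponding monomial $p^{-(z_{i_1}+\cdots+z_{i_j})}$ (with $\{i_1,\ldots,i_j\}=\{i:\nu_i=1\}$) occurs only inside $e_j$, so its coefficient is $-(-1)^{j-t}\binom{j-1}{t-1}=(-1)^{j-t+1}\binom{j-1}{t-1}$ if $j\ge t$, and is $0$ if $1\le j<t$ (since only the $e_j$ with $t\le j\le k$ appear). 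This is exactly formula \eqref{psi}.

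There is no real obstacle here; the only step that warrants a word of care is the multiplicativity of $\psi_{k,t}$, handled above via invertibility of the constant $1$ function, just as for $\lambda_{k,s}$ in Lemma \ref{Lemma_convo}. As a consistency check one may set $z_1=\cdots=z_k=1$ in the Euler product and verify, using the values just obtained together with the second expression for $G_{k,t}$ in \eqref{notation_G}, that one recovers the stated $L_{k,t}(1,\ldots,1)=\prod_p \bigl(pG_{k,t}(p)+(-1)^{k-t+1}\binom{k-1}{t-1}\bigr)/p^k$; this is a routine computation.
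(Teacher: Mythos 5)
Your proof is correct and follows essentially the same route as the paper, which simply invokes Lemma \ref{Lemma_Dir_series_rho} and refers back to the argument of Lemma \ref{Lemma_convo}: factor out $\zeta(z_1)\cdots\zeta(z_k)$ to get $\varrho_{k,t}=\1*\psi_{k,t}$, deduce multiplicativity, and read off the values of $\psi_{k,t}$ at prime powers by matching the Euler product against the explicit local factor of $L_{k,t}$. Your term-by-term identification of the coefficients of the elementary symmetric polynomials is exactly the intended verification of \eqref{psi}.
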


\begin{proof} This follows by Lemma \ref{Lemma_Dir_series_rho}, where
\begin{equation*}
L_{k,t}(z_1,\ldots,z_k)= \sum_{n_1,\ldots,n_r=1}^{\infty} \frac{\psi_{k,t}(n_1,\ldots,n_k)}{n_1^{z_1}\cdots n_k^{z_k}}.
\end{equation*}

See the proof of the similar results included in Lemma \ref{Lemma_convo}.
\end{proof}

We will need the following properties the function $\psi_{k,t}$, given by \eqref{psi}, which are similar to the function $\lambda_{k,s}$. Cf. Lemma \ref{Lemma_prop_lambda}.

\begin{lemma} \label{Lemma_prop_psi} Let $j_1,\ldots,j_k,\delta\in \N$.

a) If $\gcd(j_1\cdots j_k,\delta)\ne 1$, then  $\psi_{k,t}(j_1\delta,\ldots,j_k\delta)=0$. If $\gcd(j_1\cdots j_k,\delta) = 1$, then 
\begin{equation*}
\psi_{k,t}(j_1\delta,\ldots,j_k\delta) = \psi_{k,t}(j_1,\ldots,j_k) \psi_{k,t}(\delta,\ldots,\delta).
\end{equation*}

b) $\psi_{k,t}(\delta,\ldots,\delta) =\left((-1)^{k-t+1}\binom{k-1}{t-1}\right)^{\omega(\delta)}\mu^2(\delta)$.
\end{lemma}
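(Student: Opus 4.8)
The plan is to mirror the proof of Lemma~\ref{Lemma_prop_lambda}, since $\psi_{k,t}$ has the same structural features as $\lambda_{k,s}$: it is multiplicative, it is supported on squarefree arguments, and on a prime it depends only on how many of the exponents equal~$1$. Concretely, first I would record the observation, immediate from \eqref{psi}, that $\psi_{k,t}(p^{\nu_1},\dots,p^{\nu_k})=0$ whenever some $\nu_i\ge 2$; this is what forces the $\mu^2$ factors in both parts. The key quantity to isolate is $\psi_{k,t}(p,\dots,p)$, i.e.\ the case $\nu_1=\dots=\nu_k=1$, so that $j=k$ and \eqref{psi} gives $\psi_{k,t}(p,\dots,p)=(-1)^{k-t+1}\binom{k-1}{t-1}$ for every prime $p$.

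For part a), if $\gcd(j_1\cdots j_k,\delta)\ne 1$ then some prime $p$ divides both $j_i$ (for some $i$) and $\delta$, so $p^2 \divides j_i\delta$, hence $j_i\delta$ is not squarefree and $\psi_{k,t}(j_1\delta,\dots,j_k\delta)=0$ by the vanishing observation above. If $\gcd(j_1\cdots j_k,\delta)=1$, then the two $k$-tuples $(j_1,\dots,j_k)$ and $(\delta,\dots,\delta)$ have coprime products, so multiplicativity of $\psi_{k,t}$ (Lemma~\ref{Lemma_convo_rho}) gives $\psi_{k,t}(j_1\delta,\dots,j_k\delta)=\psi_{k,t}(j_1,\dots,j_k)\psi_{k,t}(\delta,\dots,\delta)$ directly. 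For part b), if $\delta$ is not squarefree then $\psi_{k,t}(\delta,\dots,\delta)=0=\mu^2(\delta)$, matching the claimed formula; if $\delta=p_1\cdots p_r$ is squarefree with $r=\omega(\delta)$, then the $k$-tuples $(p_i,\dots,p_i)$ have pairwise coprime products, so by multiplicativity $\psi_{k,t}(\delta,\dots,\delta)=\prod_{i=1}^r \psi_{k,t}(p_i,\dots,p_i)=\bigl((-1)^{k-t+1}\binom{k-1}{t-1}\bigr)^{r}$, which is exactly $\bigl((-1)^{k-t+1}\binom{k-1}{t-1}\bigr)^{\omega(\delta)}\mu^2(\delta)$.

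There is really no main obstacle here; the statement is a bookkeeping consequence of multiplicativity together with the explicit values in \eqref{psi}, and the only point requiring the tiniest care is reading off that $j=k$ (not some smaller value) in the fully-supported case $\nu_1=\dots=\nu_k=1$. I would simply refer back to the analogous arguments in the proof of Lemma~\ref{Lemma_prop_lambda} to keep the exposition short.

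\begin{proof} a) If $\gcd(j_1\cdots j_k,\delta)\ne 1$, then there is a prime $p$ and an index $i$ with $p\divides j_i$ and $p\divides \delta$, so $p^2\divides j_i\delta$, hence $j_i\delta$ is not squarefree. By \eqref{psi}, $\psi_{k,t}(p^{\nu_1},\ldots,p^{\nu_k})=0$ as soon as some $\nu_m\ge 2$; applying the multiplicative factorization of $\psi_{k,t}$ over the primes, the factor corresponding to $p$ vanishes, and thus $\psi_{k,t}(j_1\delta,\ldots,j_k\delta)=0$. If $\gcd(j_1\cdots j_k,\delta)=1$, then $\gcd\bigl((j_1\cdots j_k),(\delta\cdots \delta)\bigr)=1$ as well, so the multiplicativity of $\psi_{k,t}$ (Lemma \ref{Lemma_convo_rho}) gives $\psi_{k,t}(j_1\delta,\ldots,j_k\delta) = \psi_{k,t}(j_1,\ldots,j_k)\,\psi_{k,t}(\delta,\ldots,\delta)$.

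b) If $\delta$ is not squarefree, then $\psi_{k,t}(\delta,\ldots,\delta)=0=\mu^2(\delta)$ by the vanishing noted in a), and the claimed identity holds. If $\delta=p_1\cdots p_r$ is squarefree with $r=\omega(\delta)$, then the $k$-tuples $(p_i,\ldots,p_i)$ for $1\le i\le r$ have pairwise coprime products, so by multiplicativity
\begin{equation*}
\psi_{k,t}(\delta,\ldots,\delta) = \prod_{i=1}^r \psi_{k,t}(p_i,\ldots,p_i).
\end{equation*}
For each prime $p$, taking $\nu_1=\cdots=\nu_k=1$ in \eqref{psi} gives $j=\nu_1+\cdots+\nu_k=k\ge t$, hence $\psi_{k,t}(p,\ldots,p)=(-1)^{k-t+1}\binom{k-1}{t-1}$. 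Therefore
\begin{equation*}
\psi_{k,t}(\delta,\ldots,\delta) = \left((-1)^{k-t+1}\binom{k-1}{t-1}\right)^{r} = \left((-1)^{k-t+1}\binom{k-1}{t-1}\right)^{\omega(\delta)}\mu^2(\delta),
\end{equation*}
which completes the proof.
\end{proof}
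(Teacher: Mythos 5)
Your proof is correct and follows exactly the route the paper intends: the paper states this lemma without proof, referring the reader to the analogous Lemma \ref{Lemma_prop_lambda}, and your argument is precisely that argument transplanted to $\psi_{k,t}$ (vanishing on non-squarefree arguments from \eqref{psi}, multiplicativity for the coprime case, and $\psi_{k,t}(p,\ldots,p)=(-1)^{k-t+1}\binom{k-1}{t-1}$ from the $j=k$ case). No issues.
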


\begin{lemma} \label{Lemma_Repr_B_N} Let $2 \le t \le k$. For every $n\ge k$ we have
\begin{equation} \label{form_B_k_t}
B_{k,t}(n) = \sum_{\delta \divides n} \mu^2(\delta) \left((-1)^{k-t+1} \binom{k-1}{t-1}\right)^{\omega(\delta)}
\sum_{\substack{1\le j_1,\ldots,j_k\le n/\delta \\ \gcd(j_1,\ldots,j_k)=1\\ \gcd(j_1\cdots j_k, \delta)=1 }}  \psi_{k,t}(j_1,\ldots,j_k)
N(n/\delta;j_1,\ldots,j_k).
\end{equation}
\end{lemma}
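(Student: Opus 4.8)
The plan is to transcribe the proof of Lemma~\ref{Lemma_Repr_A_N}, with $\vartheta_{k,s}$ and $\lambda_{k,s}$ replaced by $\varrho_{k,t}$ and $\psi_{k,t}$ throughout. First I would rewrite $B_{k,t}(n)$ using its definition together with the convolution identity of Lemma~\ref{Lemma_convo_rho}:
\begin{equation*}
B_{k,t}(n)= \sum_{\substack{x_1,\ldots,x_k\ge 1\\ x_1+\cdots+x_k=n}} \varrho_{k,t}(x_1,\ldots,x_k)
= \sum_{\substack{x_1,\ldots,x_k\ge 1\\ x_1+\cdots+x_k=n}}\ \sum_{d_1\divides x_1,\ldots,d_k\divides x_k} \psi_{k,t}(d_1,\ldots,d_k).
\end{equation*}
Interchanging the order of summation and setting $x_i=d_ia_i$, the inner count becomes the restricted partition function $N(n;d_1,\ldots,d_k)$ of \eqref{def_N}, so $B_{k,t}(n)=\sum_{1\le d_1,\ldots,d_k\le n}\psi_{k,t}(d_1,\ldots,d_k)\,N(n;d_1,\ldots,d_k)$.

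Next I would group the terms according to the value of $\delta=\gcd(d_1,\ldots,d_k)$. If $\delta\notdivides n$ the equation $d_1a_1+\cdots+d_ka_k=n$ is unsolvable, so only $\delta\divides n$ contributes; for such $\delta$, dividing through by $\delta$ gives $N(n;d_1,\ldots,d_k)=N(n/\delta;d_1/\delta,\ldots,d_k/\delta)$. Writing $d_i=j_i\delta$ with $\gcd(j_1,\ldots,j_k)=1$ and $1\le j_i\le n/\delta$, this yields
\begin{equation*}
B_{k,t}(n)=\sum_{\delta\divides n}\ \sum_{\substack{1\le j_1,\ldots,j_k\le n/\delta\\ \gcd(j_1,\ldots,j_k)=1}} \psi_{k,t}(j_1\delta,\ldots,j_k\delta)\,N(n/\delta;j_1,\ldots,j_k),
\end{equation*}
the exact analogue of \eqref{sum_for_A}. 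Finally I would apply Lemma~\ref{Lemma_prop_psi}: part a) shows $\psi_{k,t}(j_1\delta,\ldots,j_k\delta)=0$ unless $\gcd(j_1\cdots j_k,\delta)=1$, in which case $\psi_{k,t}(j_1\delta,\ldots,j_k\delta)=\psi_{k,t}(j_1,\ldots,j_k)\,\psi_{k,t}(\delta,\ldots,\delta)$, and part b) evaluates $\psi_{k,t}(\delta,\ldots,\delta)=\big((-1)^{k-t+1}\binom{k-1}{t-1}\big)^{\omega(\delta)}\mu^2(\delta)$. Substituting these and pulling the factor depending only on $\delta$ out of the inner sum gives \eqref{form_B_k_t}.

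There is no genuine obstacle here — the proof is a mechanical copy of that of Lemma~\ref{Lemma_Repr_A_N}. The only point requiring a little care is the passage from $d_i$ to the reduced variables $j_i=d_i/\delta$: one must check that the vanishing in Lemma~\ref{Lemma_prop_psi}~a) (caused by some $j_i\delta$ failing to be squarefree whenever a prime divides $\gcd(j_i,\delta)$) is precisely what permits appending the condition $\gcd(j_1\cdots j_k,\delta)=1$ to the summation range without altering the sum, and that the multiplicativity clause of Lemma~\ref{Lemma_prop_psi}~a) then factors the summand correctly. Since Lemma~\ref{Lemma_prop_psi} was formulated to package exactly these facts, quoting it closes the argument in one step.
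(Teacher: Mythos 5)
Your proof is correct and follows exactly the route the paper takes: expand $\varrho_{k,t}$ via the convolution identity of Lemma~\ref{Lemma_convo_rho}, interchange summation to introduce $N(n;d_1,\ldots,d_k)$, group by $\delta=\gcd(d_1,\ldots,d_k)$, and invoke Lemma~\ref{Lemma_prop_psi} to insert the condition $\gcd(j_1\cdots j_k,\delta)=1$ and factor out $\psi_{k,t}(\delta,\ldots,\delta)$. No differences worth noting.
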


\begin{proof} We follow the same steps as in the proof of Lemma \ref{Lemma_Repr_A_N}. First, by the definitions of $B_{k,t}(n)$ and
$\varrho_{k,t}(n_1,\ldots,n_k)$,
and by Lemma \ref{Lemma_convo_rho} we have
\begin{equation*}
B_{k,t}(n)= \sum_{\substack{x_1,\ldots,x_k\ge 1\\x_1+\cdots+ x_k=n}} \varrho_{k,t}(x_1,\ldots,x_k)
= \sum_{\substack{x_1,\ldots,x_k\ge 1\\ x_1+\cdots+ x_k=n}} \sum_{d_1\divides x_1, \ldots, d_k\divides x_k}
\psi_{k,t}(d_1,\ldots,d_k)
\end{equation*}
\begin{equation*}
= \sum_{1\le d_1,\ldots,d_k\le n} \psi_{k,t}(d_1,\ldots,d_k)
\sum_{\substack{a_1,\ldots,a_k\ge 1\\ d_1a_1+\cdots+ d_ka_k=n}} 1
\end{equation*}
\begin{equation*}
= \sum_{1\le d_1,\ldots,d_k\le n}  \psi_{k,t}(d_1,\ldots,d_k) N(n;d_1,\ldots,d_k).
\end{equation*}

By grouping the terms according to the values of $\delta=\gcd(a_1,\ldots,a_k)$,
\begin{equation} \label{sum_for_B}
B_{k,t}(n) = \sum_{\delta \divides n} \sum_{\substack{1\le j_1,\ldots,j_k\le n/\delta \\ \gcd(j_1,\ldots,j_k)=1}}
\psi_{k,t}(j_1\delta,\ldots,j_k\delta)
N(n/\delta;j_1,\ldots,j_k).
\end{equation}

By using Lemma \ref{Lemma_prop_psi}  we obtain identity \eqref{form_B_k_t}.
\end{proof}

\begin{lemma} \label{Lemma_V_delta} If $2 \le t \le k$, then for every $\delta \in \N$,
\begin{equation*}
V_{\delta}: =\sum_{\substack{j_1,\ldots,j_k=1 \\ \gcd(j_1,\ldots,j_k)=1\\ \gcd(j_1\cdots j_k,\delta)=1 }}^{\infty}
\frac{\psi_{k,t}(j_1,\ldots,j_k)}{j_1\cdots j_k}
=  \prod_{p\, \centernot\divides \, \delta} \frac{G_{k,t}(p)}{p^{k-1}},
\end{equation*}
\end{lemma}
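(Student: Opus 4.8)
The plan is to mimic exactly the computation of $T_\delta$ carried out in Lemma \ref{Lemma_T_delta}, since $V_\delta$ has the identical structure with $\psi_{k,t}$ in place of $\lambda_{k,s}$, with Lemma \ref{Lemma_Dir_series_rho} in place of Lemma \ref{Lemma_Dir_series}, and with the multiplicativity properties from Lemma \ref{Lemma_prop_psi} in place of those in Lemma \ref{Lemma_prop_lambda}. First I would remove the coprimality constraint $\gcd(j_1,\ldots,j_k)=1$ by writing $\1_{\gcd=1} = \sum_{c\divides\gcd(j_1,\ldots,j_k)}\mu(c)$, substituting $j_i = c v_i$, and noting $c$ may be taken squarefree and coprime to $\delta$. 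Using part a) of Lemma \ref{Lemma_prop_psi} with $c$ playing the role of $\delta$, the sum factors as
\begin{equation*}
V_\delta = \sum_{\substack{c=1\\(c,\delta)=1}}^\infty \frac{\mu(c)\psi_{k,t}(c,\ldots,c)}{c^k} \sum_{\substack{v_1,\ldots,v_k=1\\ \gcd(v_1\cdots v_k,\,c\delta)=1}}^\infty \frac{\psi_{k,t}(v_1,\ldots,v_k)}{v_1\cdots v_k}.
\end{equation*}

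Next I would evaluate the inner $v$-sum using Lemma \ref{Lemma_Dir_series_rho} at $z_1=\cdots=z_k=1$, restricted to the primes not dividing $c\delta$; this gives $\prod_{p\,\centernot\divides\,c\delta}\bigl(pG_{k,t}(p)+(-1)^{k-t+1}\binom{k-1}{t-1}\bigr)/p^k$. Pulling out the full product over $p\centernot\divides\delta$ and compensating with an inverse Euler factor over $p\divides c$, the outer sum over squarefree $c$ coprime to $\delta$ becomes an Euler product $\prod_{p\,\centernot\divides\,\delta}\bigl(1 - \tfrac{\psi_{k,t}(p,\ldots,p)}{p^k}\cdot(p^k/(pG_{k,t}(p)+(-1)^{k-t+1}\binom{k-1}{t-1}))\bigr)$. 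Here I would use that $\psi_{k,t}(p,\ldots,p) = (-1)^{k-t+1}\binom{k-1}{t-1}$ (the $j=k$ case of \eqref{psi}, also recorded via Lemma \ref{Lemma_prop_psi} b) with $\omega(p)=1$). Combining the two products over $p\centernot\divides\delta$ factor by factor, the quantity $(-1)^{k-t+1}\binom{k-1}{t-1}$ cancels and one is left with $\prod_{p\,\centernot\divides\,\delta} pG_{k,t}(p)/p^k = \prod_{p\,\centernot\divides\,\delta} G_{k,t}(p)/p^{k-1}$, which is the claim.

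The only genuinely delicate point is the absolute convergence needed to justify interchanging sums and rearranging into Euler products: the outer $c$-sum converges absolutely because $\psi_{k,t}(c,\ldots,c)$ is supported on squarefree $c$ and bounded by $\bigl(\binom{k-1}{t-1}\bigr)^{\omega(c)}$, so the $c$-sum is dominated by $\prod_p(1+\binom{k-1}{t-1}/p^k)<\infty$ since $k\ge 2$; and the inner $v$-sum is absolutely convergent by Lemma \ref{Lemma_Dir_series_rho} (the condition $\Re(z_{i_1}+\cdots+z_{i_j})>1$ for $j\ge t\ge 2$ holds at $z_i=1$). I expect this bookkeeping, rather than any conceptual difficulty, to be the main thing to get right; everything else is the verbatim analogue of Lemma \ref{Lemma_T_delta}. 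I would end the proof by simply noting that the argument is identical to that of Lemma \ref{Lemma_T_delta} with the obvious substitutions, and carry out only the one-line Euler-factor cancellation explicitly.
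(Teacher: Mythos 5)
Your proposal is correct and follows essentially the same route as the paper's proof of Lemma \ref{Lemma_V_delta}: Möbius inversion to remove the condition $\gcd(j_1,\ldots,j_k)=1$, factorization via Lemma \ref{Lemma_prop_psi}, evaluation of the inner sum by Lemma \ref{Lemma_Dir_series_rho}, and the final Euler-factor cancellation using $\psi_{k,t}(p,\ldots,p)=(-1)^{k-t+1}\binom{k-1}{t-1}$. The paper likewise presents this as the direct analogue of Lemma \ref{Lemma_T_delta}; your added remarks on absolute convergence are sound and only make explicit what the paper leaves implicit.
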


\begin{proof} We have by using the property of the M\"{o}bius function and the properties of the function $\psi_{k,t}$ in  Lemma \ref{Lemma_prop_psi},
\begin{equation*}
V_{\delta}= \sum_{\substack{j_1,\ldots,j_k=1 \\ \gcd(j_1\cdots j_k,\delta)=1 }}^{\infty}
\frac{\psi_{k,t}(j_1,\ldots,j_k)}{j_1\cdots j_k} \sum_{c\divides \gcd(j_1,\ldots,j_k)} \mu(c)
\end{equation*}
\begin{equation*}
 = \sum_{\substack{c,v_1,\ldots,v_k=1 \\ \gcd(cv_1\cdots v_k,\delta)=1 }}^{\infty}
\frac{\mu(c) \psi_{k,t}(cv_1,\ldots,cv_k)}{cv_1\cdots cv_k} =
\sum_{\substack{c=1\\ (c,\delta)=1}}^{\infty} \frac{\mu(c)\psi_{k,t}(c,\ldots,c)}{c^k}
\sum_{\substack{v_1,\ldots,v_k=1 \\ \gcd(v_1\cdots v_k,c\delta)=1}}^{\infty}
\frac{\psi_{k,t}(v_1,\ldots,v_k)}{v_1\cdots v_k}.
\end{equation*}

We deduce from Lemma
\ref{Lemma_Dir_series_rho} that
\begin{equation*}
V_{\delta} = \sum_{\substack{c=1\\ (c,\delta)=1}}^{\infty} \frac{\mu(c)\psi_{k,t}(c,\ldots,c)}{c^k} \prod_{p\, \centernot\divides \, c\delta}
\frac{pG_{k,t}(p)+(-1)^{k-t+1}\binom{k-1}{t-1}}{p^k}
\end{equation*}
\begin{equation*}
= \prod_{p\, \centernot\divides \, \delta} \frac{pG_{k,t}(p)+(-1)^{k-t+1}\binom{k-1}{t-1}}{p^k}  \sum_{\substack{c=1\\ (c,\delta)=1}}^{\infty}
\frac{\mu(c)\psi_{k,t}(c,\ldots,c)}{c^k} \prod_{p\divides c}  \left(\frac{pG_{k,t}(p)+(-1)^{k-t+1}\binom{k-1}{t-1}}{p^k}\right)^{-1}
\end{equation*}
\begin{equation*}
= \prod_{p\, \centernot\divides \, \delta} \frac{pG_{k,t}(p)+(-1)^{k-t+1}\binom{k-1}{t-1}}{p^k} \prod_{p\, \centernot\divides \, \delta}
\left(1- \frac{\psi_{k,t}(p,\ldots,p)}{p^k}\cdot  \frac{p^k}{pG_{k,t}(p)+(-1)^{k-t+1}\binom{k-1}{t-1}}\right)
\end{equation*}
\begin{equation*}
= \prod_{p\, \centernot\divides \, \delta} \frac{G_{k,t}(p)}{p^{k-1}},
\end{equation*}
using that $\psi_{k,t}(p,\ldots,p)=(-)^{k-t+1}\binom{k-1}{t-1}$. Cf. property b) of Lemma \ref{Lemma_prop_psi}.
\end{proof}

We also need the following estimates proved in \cite[Sect.\ 4]{Tot2016}, by similar arguments as given in the proof of Theorem \ref{Theorem_A}.

\begin{lemma} \label{Lemma_error_psi} Let $2\le t\le k$. Then for any $i$ \textup{($1\le i\le k$)},
\begin{equation} \label{estimate_1}
\sum_{j_1,\ldots,j_k\le x} \frac{|\psi_{k,t}(j_1,\ldots,j_k)|}{j_1\cdots j_{i-1}j_{i+1}\cdots j_k} \ll
\begin{cases} 1, & \text{ if $t\ge 3$}, \\ (\log x)^{k-1}, & \text{ if $t=2$}. \end{cases}
\end{equation}

Furthermore,
\begin{equation} \label{estimate_2}
\sideset{}{'}\sum_{j_1,\ldots,j_k} \frac{|\psi_{k,t}(j_1,\ldots,j_k)|}{j_1\cdots j_k} \ll
\begin{cases} x^{-1}, & \text{ if $t\ge 3$}, \\ x^{-1} (\log x)^{k-1}, & \text{ if $t=2$}, \end{cases}
\end{equation}
with $\sum^{'}$ meaning that there is at least one $i$ \textup{($1\le i\le k$)} such that $j_i>x$.
\end{lemma}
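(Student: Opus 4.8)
The plan is to reproduce, almost line by line, the error estimates performed in the proof of Theorem \ref{Theorem_A}, with the multiplicative function $\psi_{k,t}$ in the role of $\lambda_{k,s}$ and with Lemma \ref{Lemma_Dir_series_rho} replacing Lemma \ref{Lemma_Dir_series}. Since $\psi_{k,t}$ is multiplicative and, by \eqref{psi}, supported on tuples $(j_1,\ldots,j_k)$ in which every prime divides each $j_i$ to order at most $1$ and divides at least $t$ of the $j_i$'s (or none), the same structural tricks used for $\lambda_{k,s}$ apply verbatim. The only genuinely new bookkeeping is tracking the coefficient of $1/p$ in the relevant local Euler factors; this is what produces the dichotomy $t\ge 3$ versus $t=2$.

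For \eqref{estimate_1}, since $|\psi_{k,t}|$ is multiplicative and nonnegative,
\[
\sum_{j_1,\ldots,j_k\le x} \frac{|\psi_{k,t}(j_1,\ldots,j_k)|}{j_1\cdots j_{i-1}j_{i+1}\cdots j_k} \le \prod_{p\le x} \sum_{\nu_1,\ldots,\nu_k=0}^{\infty} \frac{|\psi_{k,t}(p^{\nu_1},\ldots,p^{\nu_k})|}{p^{\nu_1+\cdots+\nu_{i-1}+\nu_{i+1}+\cdots+\nu_k}} = \prod_{p\le x}\Bigl(1+\frac{c_1}{p}+\cdots+\frac{c_{k-1}}{p^{k-1}}\Bigr),
\]
with $c_1,\ldots,c_{k-1}\ge 0$. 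By \eqref{psi}, a contribution $p^{-1}$ comes only from $(\nu_1,\ldots,\nu_k)\in\{0,1\}^k$ with $j:=\nu_1+\cdots+\nu_k\ge t$ and $\nu_1+\cdots+\nu_{i-1}+\nu_{i+1}+\cdots+\nu_k=1$; then $j=\nu_i+1\le 2$, forcing $t=2$, $j=2$, $\nu_i=1$ and exactly one other $\nu_\ell=1$, so $c_1=k-1$ if $t=2$ and $c_1=0$ if $t\ge 3$. Hence for $t\ge 3$ the $p$-factor is $1+O(p^{-2})$ and the product over $p\le x$ is $\ll 1$, while for $t=2$ it is $1+(k-1)/p+O(p^{-2})$, so the product is $\ll \prod_{p\le x}(1+1/p)^{k-1}\ll(\log x)^{k-1}$ by Mertens' theorem, which is \eqref{estimate_1}.

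For \eqref{estimate_2} I would fix $I\subseteq\{1,\ldots,k\}$ with $m:=\#I\ge 1$, and estimate the sum $U$ of $|\psi_{k,t}(j_1,\ldots,j_k)|/(j_1\cdots j_k)$ over tuples with $j_i>x$ for $i\in I$ and $j_\ell\le x$ for $\ell\notin I$, as in cases i)--iii) of the proof of Theorem \ref{Theorem_A}. If $m\ge 3$, insert $\prod_{i\in I}j_i^{\varepsilon-1/2}\le x^{m(\varepsilon-1/2)}$ with $0<\varepsilon<1/6$; the remaining sum is the multiple Dirichlet series of $\psi_{k,t}$ with exponents $1/2+\varepsilon$ on the coordinates in $I$ and $1$ elsewhere, which converges by Lemma \ref{Lemma_Dir_series_rho} (any $j$-subset sum with $j\ge t\ge 2$ is at least $1+2\varepsilon>1$), and $m(\varepsilon-1/2)\le -1$, so $U\ll x^{-1}$. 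If $m=1$, say $I=\{a\}$, then a prime $p\mid j_a$ with $p>x$ would, by \eqref{psi}, have to divide at least $t\ge 2$ of the $j$'s, hence some $j_\ell\le x$ with $\ell\ne a$ — impossible; so only primes $\le x$ occur, and factoring out $1/j_a<1/x$ reduces $U$ to $x^{-1}$ times a product of the type appearing in \eqref{estimate_1}, giving $U\ll x^{-1}$ if $t\ge 3$ and $U\ll x^{-1}(\log x)^{k-1}$ if $t=2$.

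The main obstacle is the case $m=2$, say $I=\{a,b\}$. If $t\ge 3$, a prime $p>x$ dividing $j_a$ or $j_b$ can divide at most the two indices $a,b$ (the only $j$'s exceeding $x$), i.e.\ fewer than $t$, which by \eqref{psi} forces $\psi_{k,t}=0$; so all primes are $\le x$, $1/(j_aj_b)<1/x^2$, and an Euler product as in \eqref{estimate_1} gives $U\ll x^{-2}(\log x)^{O_k(1)}\ll x^{-1}$. When $t=2$ this fails, since a prime $>x$ may divide exactly $j_a$ and $j_b$; here I would split $U=U_1+U_2$ with $U_1$ the part where $\max(j_a,j_b)>x^{3/2}$ and $U_2$ the part where $x<j_a,j_b\le x^{3/2}$, exactly as in the two-large-indices subcase of Theorem \ref{Theorem_A}. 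For $U_1$, assume by symmetry $j_a>x^{3/2}$, write $1/j_a=j_a^{-1/3}j_a^{-2/3}$ with $j_a^{2/3}>x$, and bound $U_1\le x^{-1}$ times the Dirichlet series of $\psi_{k,t}$ with exponent $1/3$ on coordinate $a$ and $1$ elsewhere, which converges by Lemma \ref{Lemma_Dir_series_rho} (every $j$-subset sum with $j\ge 2$ is $\ge 4/3>1$), so $U_1\ll x^{-1}$. For $U_2$ all $j_i\le x^{3/2}$, hence only primes $p\le x^{3/2}$ occur; using $1/(j_aj_b)<x^{-2}$ and an Euler product dividing by $p^{\sum_{\ell\ne a,b}\nu_\ell}$ yields $U_2\ll x^{-2}(\log x)^{O_k(1)}\ll x^{-1}$. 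Summing $U$ over the finitely many sets $I$ then proves \eqref{estimate_2}.
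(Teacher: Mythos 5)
Your overall strategy is the intended one: the paper does not prove this lemma in place but refers to \cite{Tot2016} and to ``similar arguments as given in the proof of Theorem \ref{Theorem_A}'', and your treatment of \eqref{estimate_1}, of the cases $m\ge 3$ and $m=1$, and of $m=2$ with $t\ge 3$ (where a prime exceeding $x$ cannot divide only $j_a$ and $j_b$ without killing $\psi_{k,t}$) all match that template and are correct, including the computation $c_1=k-1$ for $t=2$ and $c_1=0$ for $t\ge 3$.

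There is, however, a genuine error in the sub-case $m=2$, $t=2$, in your bound for $U_2$. After extracting $1/(j_aj_b)<x^{-2}$ you pass to the Euler product whose local factor at $p$ is $\sum_{\nu}|\psi_{k,2}(p^{\nu_1},\ldots,p^{\nu_k})|\,p^{-\sum_{\ell\ne a,b}\nu_\ell}$. But the exponent pattern $\nu_a=\nu_b=1$, $\nu_\ell=0$ for $\ell\ne a,b$ has $j=2\ge t$ and $|\psi_{k,2}|=\binom{1}{1}=1$, and it contributes $p^{0}=1$ to this factor; hence every local factor is at least $2$ and the product over $p\le x^{3/2}$ is at least $2^{\pi(x^{3/2})}$, not $(\log x)^{O_k(1)}$. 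Equivalently, the unrestricted sum of $|\psi_{k,2}(j_1,\ldots,j_k)|/\prod_{\ell\ne a,b}j_\ell$ over tuples supported on primes $\le x^{3/2}$ already contains $2^{\pi(x^{3/2})}$ terms equal to $1$ (take $j_a=j_b$ any squarefree product of primes $\le x^{3/2}$ and $j_\ell=1$ otherwise), so dropping the size constraints on $j_a,j_b$ after removing both from the denominator is fatal. This is exactly why, in the analogous step of the proof of Theorem \ref{Theorem_A}, only the single factor $1/j_b<1/x$ is extracted while $j_a$ is kept in the denominator: the same pattern then contributes $1/p$ rather than $1$, the local factor becomes $1+(k-1)/p+\cdots$, and one gets $U_2\ll x^{-1}(\log x)^{k-1}$, which is all that \eqref{estimate_2} requires for $t=2$ (your stronger claim $U_2\ll x^{-1}$ is not needed). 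With this one-line repair the rest of your argument goes through.
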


\begin{proof}[Proof of Theorem {\rm \ref{Theorem_B}}]
Identity \eqref{form_B_k_t} and Lemma \ref{Lemma_Partition} lead to
\begin{equation} \label{B_to_go}
B_{k,t}(n) = \frac{n^{k-1}}{(k-1)!}  \sum_{\delta \divides n} \frac{\mu^2(\delta)}{\delta^{k-1}} \left((-1)^{k-t+1} \binom{k-1}{t-1}\right)^{\omega(\delta)}
\sum_{\substack{j_1,\ldots,j_k\le n/\delta
\\ \gcd(j_1,\ldots,j_k)=1\\ \gcd(j_1\cdots j_k,\delta)=1 }} \frac{\psi_{k,t}(j_1,\ldots,j_k)}{j_1\cdots j_k}
\end{equation} 
\begin{equation} \label{Berror_ezaz}
+ O\left(n^{k-2} \sum_{\delta \divides n} \frac{\mu^2(\delta)}{\delta^{k-2}} \binom{k-1}{t-1}^{\omega(\delta)}
\sum_{j_1,\ldots,j_k\le n/\delta} \frac{(j_1+\cdots+j_k)|\psi_{k,t}(j_1,\ldots,j_k)|}{j_1\cdots j_k}\right).
\end{equation}

The inner sum $J$ in \eqref{B_to_go} can be written as
\begin{equation}  \label{def_J_R_tilde}
J:= \sum_{\substack{j_1,\ldots,j_k\le n/\delta \\ \gcd(j_1,\ldots,j_k)=1\\ \gcd(j_1\cdots j_k,\delta)=1 }} 
\frac{\psi_{k,t}(j_1,\ldots,j_k)}{j_1\cdots j_k}  = \sum_{\substack{j_1,\ldots,j_k =1 \\ \gcd(j_1,\ldots,j_k)=1\\ 
\gcd(j_1\cdots j_k,\delta)=1 }}^{\infty} \frac{\psi_{k,t}(j_1,\ldots,j_k)}{j_1\cdots j_k} + \widetilde{R}(n,\delta), 
\end{equation}
where for the remainder term $\widetilde{R}(n,\delta)$ one has
\begin{equation} \label{first_est_R_tilde}
\widetilde{R}(n,\delta) \ll  \sideset{}{'}\sum_{j_1,\ldots,j_k} \frac{|\psi_{k,t}(j_1,\ldots,j_k)|}{j_1\cdots j_k}
\end{equation}
with $\sum^{'}$ meaning that there exists at least one $i$ 
($1\le i\le k$) such that $j_i>n/\delta$.

Hence, according to \eqref{B_to_go}, \eqref{def_J_R_tilde} and using Lemma \ref{Lemma_V_delta} the main term for $B_{k,t}(n)$ is 
\begin{equation*}
\frac{n^{k-1}}{(k-1)!}  \prod_p \frac{G_{k,t}(p)}{p^{k-1}} \sum_{\delta \divides n} \frac{\mu^2(\delta)}{\delta^{k-1}}
\left((-1)^{k-t+1} \binom{k-1}{t-1}\right)^{\omega(\delta)}  \prod_{p\divides \delta} \left(\frac{G_{k,t}(p)}{p^{k-1}}\right)^{-1}
\end{equation*}
\begin{equation*}
= \frac{n^{k-1}}{(k-1)!}  \prod_p \frac{G_{k,t}(p)}{p^{k-1}}
\prod_{p\divides n} \left(1 + \frac{(-1)^{k-t+1}\binom{k-1}{t-1}}{G_{k,t}(p)} \right)
= \frac{n^{k-1}}{(k-1)!} D_{k,t} g_{k,t}(n).
\end{equation*}

Now we estimate the error $\widetilde{R}_1$ coming from \eqref{first_est_R_tilde}. By \eqref{B_to_go}, \eqref{def_J_R_tilde}, \eqref{first_est_R_tilde} 
and applying \eqref{estimate_2} in the case $x=n/\delta$ we obtain that the error $\widetilde{R}_1$ is as follows. If $k\ge t\ge 3$, then 
\begin{equation*}
\widetilde{R}_1 \ll n^{k-2} \sum_{\delta \divides n} \frac{\mu^2(\delta)}{\delta^{k-2}} \binom{k-1}{t-1}^{\omega(\delta)} =
n^{k-2} \prod_{p\divides n} \left(1+ \frac1{p^{k-2}} \binom{k-1}{t-1}\right),
\end{equation*}
which is $\ll n^{k-2}$ if $k\ge 4$. If $k=t=3$, then $\widetilde{R}_1 \ll n \prod_{p\divides n} \left(1+ \frac1{p} \right) \ll n\log \log n$.
However, formula \eqref{R_k_n_asympt} shows that in the latter case the final error term is $O(n)$.

If $k=3$, $t=2$, then
\begin{equation*}
\widetilde{R}_1 \ll n \sum_{\delta \divides n} \frac{\mu^2(\delta)}{\delta} 2^{\omega(\delta)} \left(\log n/\delta \right)^2 \le
n(\log n)^2 \prod_{p\divides n} \left(1+ \frac{2}{p}\right)
\end{equation*}
\begin{equation*}
\le n (\log n)^2 \prod_{p\divides n} \left(1+ \frac1{p}\right)^2
\ll n (\log n)^2 (\log \log n)^2.
\end{equation*}

To estimate the error $\widetilde{R}_2$ under \eqref{Berror_ezaz} we apply \eqref{estimate_1}. We deduce that if $t\ge 3$, then 
\begin{equation*}
\widetilde{R}_2 \ll n^{k-2} \sum_{\delta \divides n} \frac{\mu^2(\delta)}{\delta^{k-2}} \binom{k-1}{t-1}^{\omega(\delta)} =
n^{k-2} \prod_{p\divides n} \left(1+ \frac1{p^{k-2}} \binom{k-1}{t-1}\right),
\end{equation*}
which is $\ll n^{k-2}$ if $k\ge 4$ and is $ \ll n \prod_{p\divides n} \left(1+ \frac1{p} \right) \ll n\log \log n$ if $k=t=3$.
However, in the latter case the final error term is $O(n)$ by \eqref{R_k_n_asympt}, already mentioned above.

If $t=2$, $k\ge 3$, then we have
\begin{equation*}
\widetilde{R}_2 \ll n^{k-2} \sum_{\delta \divides n} \frac{\mu^2(\delta)}{\delta^{k-2}} (k-1)^{\omega(\delta)} \left( \log n/\delta \right)^{k-1}\le
n^{k-2} (\log n)^{k-1} \prod_{p\divides n} \left(1+ \frac{k-1}{p^{k-2}}\right)
\end{equation*}
which is $\ll n^{k-2}(\log n)^{k-1}$ if $k\ge 4$, and is $\ll n (\log n)^2 (\log \log n)^2$ if $k=3$.

This completes the proof of Theorem \ref{Theorem_B}.
\end{proof}

\section{Acknowledgement} The author thanks the referee for useful comments and suggestions concerning the presentation of the proofs of the paper.

\end{document}